\newtheorem{theorem}{Theorem}[section]
\newtheorem{lemma}[theorem]{Lemma}
\newtheorem{prop}[theorem]{Proposition}
\newtheorem{conjecture}[theorem]{Conjecture}
\theoremstyle{definition}
\newtheorem{remark}[theorem]{Remark}
\numberwithin{equation}{section}
\DeclareMathOperator{\Br}{Br}
\DeclareMathOperator{\Cone}{Cone}
\DeclareMathOperator{\Ample}{Ample}
\DeclareMathOperator{\Frac}{Frac}
\DeclareMathOperator{\Pic}{Pic}
\DeclareMathOperator{\Eff}{Eff}
\DeclareMathOperator*{\im}{im}
\DeclareMathOperator*{\Spec}{Spec}
\DeclareMathOperator*{\Proj}{Proj}
\DeclareMathOperator{\rk}{rk}
\newcommand{\Cd}{\mathds{C}}
\newcommand{\Qd}{\mathds{Q}}
\newcommand{\Qbar}{\overline{\Qd}}
\newcommand{\Zd}{\mathds{Z}}
\newcommand{\Fd}{\mathds{F}}
\newcommand{\Rd}{\mathds{R}}
\newcommand{\Gd}{\mathds{G}}
\newcommand{\Pd}{\mathds{P}}
\newcommand{\Ad}{\mathds{A}}
\newcommand{\Gda}{\mathds{G}_{\mathrm{a}}}
\newcommand{\Gdm}{\mathds{G}_{\mathrm{m}}}
\newcommand{\SL}{\mathrm{SL}}
\newcommand{\Lc}{\mathcal{L}}
\newcommand{\Oc}{\mathcal{O}}
\newcommand{\Cc}{\mathcal{C}}
\newcommand{\Xc}{\mathcal{X}}
\newcommand{\Dm}{\mathfrak{D}}
\newcommand{\Um}{\mathfrak{U}}
\newcommand{\Xm}{\mathfrak{X}}
\newcommand{\Ym}{\mathfrak{Y}}
\newcommand{\Zm}{\mathfrak{Z}}
\newcommand{\df}{\mathrm{d}}
\newcommand{\an}{\mathrm{an}}
\newcommand{\et}{\textrm{ét}}
\newcommand{\prim}{\mathrm{prim}}
\newcommand{\sums}[1]{\sum_{\substack{#1}}}
\newcommand{\ints}[1]{\int_{\substack{#1}}}
\newcommand{\norm}[1]{\lVert #1 \rVert}
   \def\MR#1{}
\begin{document}

\title[Iitaka fibrations and integral points]
{Iitaka fibrations and integral points:\\ a family of arbitrarily polarized \\
spherical threefolds}

\author{Ulrich Derenthal}

\address{Institut f\"ur Algebra, Zahlentheorie und Diskrete Mathematik, Leibniz
  Universit\"at Hannover, Welfengarten 1, 30167 Hannover, Germany}
\email{derenthal@math.uni-hannover.de}

\author{Florian Wilsch}

\address{Mathematisches Institut, Georg-August-Universität Göttingen, Bunsenstr.\ 3-5, 37073 Göttingen, Germany}
\email{florian.wilsch@mathematik.uni-goettingen.de}

\date{May 15, 2025}

\subjclass[2020]{11D45 (14G05, 14M27, 11G35)}
\begin{abstract}
  Studying Manin's program for a family of spherical log Fano threefolds, we determine the asymptotic number of integral points whose height associated with an arbitrary ample line bundle is bounded. This confirms a recent conjecture by Santens and sheds new light on the logarithmic analogue of Iitaka fibrations, which have not yet been adequately formulated.
\end{abstract}

\maketitle

\microtypesetup{protrusion=false}
\tableofcontents
\microtypesetup{protrusion=true}

\vspace{0pt plus 2pt}

\section{Introduction}

How often is a $(2 \times 2)$-determinant a perfect power?
A first step in treating such a Diophantine problem is to study geometric properties of the varieties it describes. In this case, solutions 
correspond to points on the family
\begin{equation*}
  X'_n = \left\{\det\begin{pmatrix}
      a & b\\c & d
  \end{pmatrix}=z^{n+1}\right\} \subset \Pd_\Qd(1,n,1,n,1)
\end{equation*}
of subvarieties of a weighted projective space. These are Fano varieties, which tend to have many rational points.
For $n\ge 2$, they are singular, and desingularizations $\pi_n \colon X_n \to X'_n$ are obtained by blowing up the singular locus $V(a,c,z)$. Moreover, both the varieties $X'_n$ and $X_n$ are \emph{spherical}, admitting an action by the algebraic group $G=\SL_2 \times \Gdm$ (Section~\ref{sec:boundaries}).

Manin's program \cite{MR89m:11060} asks a precise quantitative question: how many rational points of bounded anticanonical height does such a variety contain? For the singular family $X'_n$, this problem was solved by the first author and Gagliardi~\cite[Thm.~1.2]{DG18}.
As $\pi_n$ is not crepant if $n\ge 3$, their interpretation of the asymptotic formula rests on a theory by Batyrev and Tschinkel~\cite{BT98polarized}, who provide a framework to analyze asymptotic formulas for the number of rational points of bounded height with respect to height functions induced by arbitrary ample (or at least big and nef) line bundles $L$.
In particular, the leading constant of their formula involves a sum over the fibers of the \emph{Iitaka fibration} associated with the bundle $\pi_n^*\omega_{X'_n}^\vee$ on $X_n$.

For integral points, a program similar to that kick-started by Manin is underway: see \cites{MR2275615,CLT10,wilsch-toric} and the references therein for contributions and work of Santens~\cite[\S\,6]{Santens23} for a current conjecture.
Notably, there is not yet a logarithmic analogue to the Iitaka fibrations appearing in the work of Batyrev and Tschinkel~\cite{BT98polarized} (originally dubbed $\Lc$-primitive fibrations by them).
The aim of this article is to start filling in this gap by studying the integral analogue of Manin's problem with respect to arbitrary polarizations on open log Fano subvarieties of $X_n$. The equation defining $X_n$ defines the model
\begin{equation*}
    \Xm'_n = \{ad-bc=z^{n+1}\} \subset \Pd_\Zd(1,n,1,n,1).
\end{equation*}
A model $\pi_n \colon \Xm_n \to \Xm_n'$ of the desingularization can be obtained by blowing up $V_{\Xm}(a,c,z)$.

Of particular interest are those subvarieties that are still spherical. There are two $G$-invariant effective divisors $D$ with strict normal crossings on each $X_n$ such that $(X_n, D)$ is log Fano, that is, such that the log anticanonical bundle $\omega_{X_n}(D)^\vee$ is ample. As the variety $X_n$ itself is not Fano, the empty divisor is not among these two. Theorem~\ref{thm:main-thm-abstract} provides an asymptotic formula for the number of integral points $x$ outside any such $D$ such that a height function $H_L$ associated with an arbitrary ample $\Qd$-divisor $L$ is bounded by a parameter $B$ tending to infinity. 
This result confirms the conjecture formulated by Santens~\cite{Santens23}; in those cases in which he conjectures one, this includes his leading constant.

Some cases (those that are not \emph{adjoint rigid}) fall beyond the scope of his conjecture in terms of the leading constant. We observe that the fibrations appearing in these cases are completely analogous to those for rational points and believe this to be a general phenomenon as long as the analytic Clemens complex is a simplex, that is, as long as all geometric components of the boundary divisor share a real point: in this case, the variants of the Picard group pertinent to the study of integral points coincide with the usual one~\citelist{\cite{Santens23}*{Def.~3.6, Lem.~3.10}\cite{wilsch-toric}*{Def.~2.2.1, Rmk.~2.2.2}}.

\subsection{Iitaka fibrations in the integral Manin problem}

Let $\Xc = (\Xm,D,L)$ be a \emph{polarized log Fano variety} consisting of a flat proper $\Zd$-scheme $\Xm$ whose generic fiber $X = \Xm_\Qd$ is smooth and projective, a divisor $D$ on $X$ with geometrically strict normal crossings such that the log anticanonical class $-K_X-D$ is ample, and an ample metrized line bundle $L$ on $X$. Let $H_L\colon X(\Qd)\to \Rd_{>0}$ be the height function induced by $L$.

In this setting, Santens \cite[\S\,6]{Santens23} defines constants $a(\Xc)$ and $b(\Xc)$ and conjectures that, under suitable hypotheses, there is a thin set $Z \subset X(\Qd)$ such that the number
\begin{equation}\label{eq:counting_function}
    N_{\Xc,Z}(B) := \#\{x \in \Um(\Zd) \setminus Z \mid H_L(x) \le B\}
\end{equation}
of integral points on $\Um = \Xm \setminus \overline{D}$ of bounded height grows asymptotically like
\begin{equation}\label{eq:conjecture-rigid}
    c(\Xc)B^{a(\Xc)}(\log B)^{b(\Xc)-1}
\end{equation}
for some positive constant $c(\Xc)$ as $B\to \infty$.

If all geometric components of $D$ are defined over $\Rd$ and share a real point, these constants admit the following description: let
\begin{equation*}
    a(\Xc) = \inf\{t\in \Rd \mid K_X + D + tL \in \Eff(X)\}
\end{equation*}
be the inverse of the Kodaira energy, 
\begin{equation*}
  E_\Xc = K_X+D+a(\Xc)L \in \Pic(X)_\Qd
\end{equation*}
the adjoint bundle and 
$b(\Xc)$ be the codimension of the minimal face of the effective cone containing $E_\Xc$. Under these assumptions, the polarized log variety $\Xc$ is said to be adjoint rigid if the adjoint bundle $E_\Xc$
is rigid, that is, if $H^0(X,\Oc_X(rE_{\Xc}))$ is one-dimensional for all $r\in \Qd_{>0}$ such that $rE_{\Xc}$ is integral.
In this case, Santens provides a conjecture for the leading constant $c(\Xc)$ in~\eqref{eq:conjecture-rigid} that is similar to the leading constant in Manin's conjecture by Peyre \cite{Peyre95Duke} and Batyrev--Tschinkel \cite{BT98polarized}.
Whenever this is not the case, we believe that the leading constant $c(\Xc)$ can be described in terms of the standard Iitaka fibration
\[
  \phi_{(X,L)} \colon X \dashrightarrow W=\Proj\left(\bigoplus_{k = 0}^\infty H^0(X,\Oc_X(kdE_\Xc))\right),
\]
where $d$ be a positive integer such that $dE_\Xc \in \Pic(X)$ --- the target space is a point if and only if $E_{\Xc}$ is rigid.

\begin{conjecture}\label{conj:iitaka}
  Assume that the set $\Um(\Zd)$ of integral points is not thin.
  Let $\phi=\phi_{(X,L)}$ be the Iitaka fibration associated with $L$,
  induced by its adjoint bundle $E_{\Xc}$.
  For each $t\in W(\Qd)$ for which the fiber $\phi^{-1}(t)$ is defined, let $X_t$ and $\Xm_t$ be its closures in $X$ and $\Xm$, respectively.
  
  Then there are thin sets $Z\subset X(\Qd)$ and $Y\subset W(\Qd)$ such that for every $t\in W(\Qd)\setminus Z$, the fiber $\Xc_t = (\Xm_t, D\cap X_t, L|_{X_t})$ is adjoint rigid with $a(\Xc_t)=a(\Xc)$ and $b(\Xc_t) = b(\Xc)$, such that the sum
  \begin{equation*}
      c(\Xc) = \sum_{t\in W(\Qd)\setminus Y} c(\Xc_t)
  \end{equation*}
  converges, and such that
  \begin{equation*}
    N_{\Xc,Z}(B) \sim c(\Xc) B^{a(\Xc)}(\log B)^{b(\Xc)-1}.
  \end{equation*}
\end{conjecture}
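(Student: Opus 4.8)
The plan is to prove Conjecture~\ref{conj:iitaka} not in general --- which seems out of reach with current technology --- but for the explicit family of polarized spherical threefolds described in the introduction, thereby establishing Theorem~\ref{thm:main-thm-abstract} and, as a consequence, confirming the relevant instance of the conjecture. The first step is geometric: using the theory of spherical varieties, I would write down the colored fan (Luna--Vust data) of each $X_n$ for the $G = \SL_2 \times \Gdm$-action, read off $\Pic(X_n)$, the effective and nef cones, the two $G$-invariant boundary divisors $D$ making $(X_n,D)$ log Fano, and then, for an arbitrary ample $\Qd$-divisor $L$, compute the threshold $a(\Xc) = \inf\{t : K_X + D + tL \in \Eff(X)\}$, the adjoint class $E_\Xc$, and $b(\Xc)$. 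This also pins down exactly when $\Xc$ is adjoint rigid and, when it is not, identifies the Iitaka fibration $\phi_{(X,L)}$ explicitly --- on these threefolds it should be a morphism onto $\Pd^1$ (or the constant map to a point in the rigid case), with fibers again spherical surfaces.

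Next I would parametrize the integral points on $\Um = \Xm \setminus \overline{D}$. Because $\Um$ contains a dense $G$-orbit, counting points in $\Um(\Zd)$ reduces --- after removing lower-dimensional orbits --- to an orbit problem for the arithmetic group $\SL_2(\Zd) \times \{\pm 1\}$ acting on the affine cone, which one encodes in a height zeta function $Z(s) = \sum_{x \in \Um(\Zd)} H_L(x)^{-s}$. The height with respect to $L$ should decompose along the valuations coming from the boundary and the torus, so that $Z(s)$ is expressible through Dirichlet series and Eisenstein-type lattice sums whose meromorphic continuation is classical. I would then locate the rightmost pole of $Z(s)$, show it lies at $s = a(\Xc)$ with order $b(\Xc)$, and apply a Tauberian theorem to obtain the asymptotic $c(\Xc)\,B^{a(\Xc)}(\log B)^{b(\Xc)-1}$.

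It remains to identify the leading constant. In the adjoint rigid cases I would expand the residue of $Z(s)$ at $s = a(\Xc)$ into a product of local densities and the invariants attached to $E_\Xc$, and check that this matches Santens's conjectural $c(\Xc)$; here the hypothesis that the analytic Clemens complex is a simplex is used to identify the variants of the Picard group occurring in his recipe with the usual $\Pic(X)$, as recorded in \cite{Santens23} and \cite{wilsch-toric}. In the non-adjoint-rigid cases I would instead organize the parametrization along the fibers of $\phi_{(X,L)}$: each rational point $t$ away from a thin set $Y \subset W(\Qd)$ gives a fiber $\Xc_t$ that is adjoint rigid with $a(\Xc_t) = a(\Xc)$ and $b(\Xc_t) = b(\Xc)$, $Z(s)$ splits (up to a negligible part) as $\sum_t Z_{\Xc_t}(s)$, and the residues add up to $\sum_t c(\Xc_t)$.

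The main obstacle, in the non-rigid cases, is precisely making this last decomposition rigorous: one must control the per-fiber counting functions uniformly in $t$ --- with error terms summable over the infinitely many fibers --- and prove that $\sum_{t} c(\Xc_t)$ converges, which amounts to an interchange of a limit and an infinite sum and is the technical heart of matching our asymptotic with the shape predicted by Conjecture~\ref{conj:iitaka}. A secondary difficulty is the determination of the thin exceptional set $Z$: one has to isolate the accumulating sub-loci (sub-orbits, or fibers of competing fibrations with a larger $a$-invariant), show they contribute only to lower-order terms, and verify that $\Um(\Zd)$ itself is not thin so that the main term genuinely survives.
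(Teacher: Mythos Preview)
Your outline takes a genuinely different route from the paper. You propose a height zeta function approach exploiting the $G=\SL_2\times\Gdm$-action, with meromorphic continuation via Eisenstein-type sums and a Tauberian argument. The paper instead uses the universal torsor (Cox ring) parametrization and counts entirely by elementary means: integral points are represented by integer tuples $(a,b,c,d,z,w)$ with coprimality conditions, one fixes $(a,c,z)$ and counts $(b,d)$ as lattice points in an interval, replaces sums by integrals, and removes the $\gcd$ condition by M\"obius inversion. No zeta function, no spectral theory, no Tauberian theorem. This buys the paper explicit error terms (power-saving when $b(\Xc)=1$) and avoids any automorphic machinery; your approach, if it worked, would be more conceptual and might generalize better to other spherical varieties.

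Two concrete issues with your plan. First, the Iitaka fibration in the non-rigid case is $\phi\colon X_n\to\Pd^2$, $(a:b:c:d:z:w)\mapsto(a:c:z)$, not to $\Pd^1$; its generic fibers are curves (in fact $\Ad^1$), not spherical surfaces. This matters for the fiberwise analysis, though it actually makes the per-fiber counting easier than you anticipate. Second, and more seriously, the harmonic analysis route is not obviously available for $D=D_w+D_z$: the paper observes (Remark~\ref{rmk:equivariant}) that $X_n\setminus(D_w\cup D_z)$ is \emph{not} a partial equivariant compactification of a vector group, so the Chambert-Loir--Tschinkel machinery does not apply, and a direct $\SL_2$-automorphic treatment of the height zeta function for an arbitrary ample $L$ would require substantial new input beyond ``classical'' continuation. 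The torsor method sidesteps this entirely.
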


\begin{remark}
  Under these assumptions, the real component of the fibration Santens obtains while counting integral points on toric varieties~\cite[Eq.~(35)]{Santens23} coincides with the construction by Batyrev and Tschinkel~\cite[p.~332]{BT98polarized} after replacing the canonical by the log canonical bundle in the latter. Therefore, it coincides with the Iitaka fibrations discussed above.
\end{remark}

\subsection{Integral points on the spherical family}

Turning back to the spherical family $X_n$ for $n\ge 2$ and the polarized log Fano varieties $\Xc = (\Xm_n,D,L)$ with a $G$-invariant divisor $D$ and an ample line bundle $L$ that is metrized as in Section~\ref{sec:height-functions}, the Iitaka
fibration $\phi : X_n \to \Pd^k_\Qd$ is in fact a morphism, with $k\in \{0,2\}$, depending on whether the adjoint bundle is rigid or not (Section~\ref{sec:polarizations}). For each $t \in \Pd^k(\Qd)$, let $X_{n,t} = \phi^{-1}(t)$ be the fiber.

By constructions of Chambert-Loir--Tschinkel~\cite{CLT10} and Santens~\cite{Santens23}, for each fiber $\Xc_t$, the metric on $L$ induces a Tamagawa measure $\tau_{\Xc}$ on $X_{n,t}(\Ad_\Qd)$ --- a product of measures concentrated on $\Um(\Zd_p)\cap X_{n,t}(\Qd_p)$ with convergence factors for finite primes $p$ and a \emph{residue measure} concentrated on a subset of $X_{n,t}(\Rd)$ (Section~\ref{sec:leading-constants}). Finally, let $\alpha(\Xc_t)$ be the $\alpha$-constant associated with each fiber (Sections~\ref{sec:alpha}--\ref{sec:moving-constants}).

\begin{theorem}\label{thm:main-thm-abstract}
  Let $\Xc$ be one of the polarized spherical log Fano threefolds described above. The number of integral points of bounded height satisfies
  \begin{equation*}
    N_{\Xc,\emptyset}(B) = c(\Xc)
    B^{a(\Xc)}(\log B)^{b(\Xc)-1}(1+O_\Xc(1/\log B)),
  \end{equation*}
  for $B\ge 2$ with
  \begin{equation*}
    c(\Xc) = \sum_{t\in \Pd^k(\Qd)}
    \frac{\alpha(\Xc_t)}{a(\Xc)}
    \tau_{\Xc_t}(X_{n,t}(\Ad_\Qd)).
  \end{equation*}
\end{theorem}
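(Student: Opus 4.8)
The plan is to prove Theorem~\ref{thm:main-thm-abstract} by harmonic analysis on the adelic points of the spherical threefolds, following the strategy pioneered for rational points on spherical varieties but adapted to the integral setting with an arbitrary polarization. First I would fix explicit coordinates on the $G$-variety $X_n$, determine its colored fan or its boundary divisors together with the $G$-invariant log Fano pairs $(X_n,D)$, and compute the relevant data attached to the chosen ample line bundle $L$: the effective cone $\Eff(X_n)$, the position of the adjoint class $E_\Xc = K_{X_n}+D+a(\Xc)L$ in it, hence the invariants $a(\Xc)$ and $b(\Xc)$, and the structure of the Iitaka fibration $\phi\colon X_n\to\Pd^k_\Qd$ with $k\in\{0,2\}$. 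This reduces the count to an integral over the space of $G$-invariant characters, which by Fourier inversion becomes a height zeta function whose analytic behaviour governs the asymptotics.

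Next I would express the height zeta function $Z(s)=\sum_{x\in\Um(\Zd)}H_L(x)^{-s}$ as an integral over the unitary characters of the relevant adelic quotient, using the metric on $L$ and the $G$-action to factor the local height integrals. Each local factor is a geometric series or an Igusa-type integral that can be evaluated explicitly from the boundary combinatorics, and one extracts a product of Dedekind-type zeta factors times a holomorphic remainder; the infinite place contributes the residue measure supported on $X_n(\Rd)$. Summing or integrating over the characters that are trivial on the standard torus then produces the pole of $Z(s)$ at $s=a(\Xc)$ of order $b(\Xc)$, with each character orbit corresponding to a rational point $t\in\Pd^k(\Qd)$ of the base $W$ of the Iitaka fibration; the fiberwise Tamagawa volume $\tau_{\Xc_t}(X_{n,t}(\Ad_\Qd))$ and the $\alpha$-constant $\alpha(\Xc_t)$ emerge as the local and archimedean contributions of that orbit, exactly as in Sections~\ref{sec:leading-constants}--\ref{sec:moving-constants}.

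The asymptotic formula, including the error term $O_\Xc(1/\log B)$, then follows from a Tauberian theorem once I establish that $Z(s)$ extends meromorphically slightly past $\Re s = a(\Xc)$ with the stated pole and no other singularities on that line, and that it has at most polynomial growth in vertical strips. Controlling the sum over the base $\Pd^k(\Qd)$—showing that $\sum_t \alpha(\Xc_t)\,\tau_{\Xc_t}(X_{n,t}(\Ad_\Qd))$ converges and equals the residue—requires uniform estimates on the fiberwise constants, which is where the explicit spherical structure is essential: the fibers $X_{n,t}$ are themselves spherical (toric, in fact) surfaces, so their zeta functions and volumes can be bounded uniformly in $t$.

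The main obstacle I expect is the bookkeeping in the non-adjoint-rigid cases ($k=2$), where the pole of order $b(\Xc)$ is produced not by a single term but by the whole family of fibers, and one must show that the naive Fourier-analytic estimate on each character is summable against the lattice of characters parametrising $\Pd^2(\Qd)$ while simultaneously matching the leading constant to $\sum_t \alpha(\Xc_t)\tau_{\Xc_t}(\cdot)/a(\Xc)$. This demands a careful Poisson-summation argument on the base together with a uniform lower bound preventing cancellation, and it is precisely the step where the conjectural picture of a logarithmic Iitaka fibration must be made rigorous for this family; the adjoint-rigid cases are comparatively routine once the local integrals are in hand.
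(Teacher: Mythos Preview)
Your approach via the height zeta function and adelic harmonic analysis is genuinely different from what the paper does. The paper proceeds by direct elementary counting on the universal torsor $\Ym_n$: integral points on $\Um$ correspond to integer tuples $(a,b,c,d,z,w)$ satisfying $ad-bc=z^{n+1}w$ and the coprimality conditions, with $w=\pm1$ (and additionally $z=\pm1$ when $D=D_w+D_z$). For fixed $(a,c,z)$ the count over $(b,d)$ is a one-dimensional lattice-point estimate (Lemma~\ref{lem:summation-b}); the remaining sums over $a$, $c$, and $z$ are then turned into integrals one variable at a time, with M\"obius inversion removing the coprimality condition in the rigid case (Proposition~\ref{prop:rigid_integrals}) and a tail estimate extending the sum to all of $\Pd^2(\Qd)$ in the moving case (Proposition~\ref{prop:count-moving}). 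No spectral decomposition or Tauberian theorem enters; all error terms come from explicit sum--integral comparisons, and the leading constant is then matched to the Tamagawa description by direct manipulation of the resulting real integrals (Lemma~\ref{lem:W_interpretation}). The introduction in fact flags this as the point: the torsor method, not zeta-function analysis, is what is being extended to arbitrary polarizations here.

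Your route has a concrete obstruction in the case $D=D_w+D_z$. Remark~\ref{rmk:equivariant} shows that $X_n\setminus(D_w\cup D_z)$ is \emph{not} a partial equivariant compactification of $\Gda^3$ (or any vector group), so the Chambert-Loir--Tschinkel framework does not apply, and since the spherical group $G=\SL_2\times\Gdm$ is non-abelian there is no ready-made character lattice on which to base the Fourier inversion you describe. Making harmonic analysis work would require the spectral theory of spherical varieties adapted to integral points with arbitrary polarizations, which is not available off the shelf and is not what you have sketched. A smaller slip: the fibers $X_{n,t}$ of the Iitaka fibration $\phi\colon X_n\to\Pd^2$ are curves (generically $\Ad^1$ on $\Um$, see Section~\ref{sec:moving-constants}), not surfaces.
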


In fact, we obtain a power-saving error term (as in Proposition~\ref{prop:rigid_integrals}) whenever $b(\Xc)=1$ (that is, whenever the adjoint bundle $E_\Xc$ is nonzero). In particular, this confirms Conjecture~\ref{conj:iitaka} for this family. The exponents and leading constants are completely explicit. The former are described in~(\ref{eq:a-invariant-Dw},\,\ref{eq:a-invariant-DwDz},\,\ref{eq:b-invariant}). In the \emph{adjoint rigid} case, the fibration $\phi$ is in fact a constant morphism. The leading constant is associated with its unique fiber: the $\alpha$-constant is described in (\ref{eq:alpha_loganticanonical}, \ref{eq:alpha_not_loganticanonical}). The Tamagawa volume decomposes as
\[
\tau_{\Xc}(X_{n,t}(\Ad_\Qd)) = \prod_v \omega_{\Xc, v};
\]
its archimedean factor is described in~(\ref{eq:omega_infty_Dw},\,\ref{eq:omega_infty_DwDz_1},\,\ref{eq:omega_infty_DwDz_2}) and contains the height function~\eqref{eq:height_torsor} its denominator, and its $p$-adic factors are described in~(\ref{eq:omega_p}). 
Whenever the adjoint bundle is \emph{moving}, the Iitaka fibration is described in~\eqref{eq:iitaka}. The leading constant associated with each fiber is described in~\eqref{eq:constants-moving-explicit}.

The ample line bundle $L$ (the polarization of $X$) associated with the height function is arbitrary. Such general height functions have mostly been studied using an approach that analyzes the height zeta function, exploiting an almost transitive action of an algebraic group; for integral points, this notably includes work by Takloo-Bighash, Tschinkel~\cite{MR3117310}, and Chow~\cite{Chow} on wonderful compactifications and by Santens~\cite{Santens23} on toric varieties (using harmonic analysis on their universal torsors). For varieties without such a special structure, the torsor method has previously only been used to count rational or integral points of bounded (log) anticanonical height; see \cites{WilschIMRN,DW24,OrtmannPaper} for applications in the integral setting.

\subsection{Acknowledgements}

We would like to thank Christian Bernert and Tim Browning for their remarks on an earlier version of this article. The first author was supported by the Deutsche Forschungsgemeinschaft (DFG) -- 512730679 (RTG2965). The second author was supported by the Deutsche Forschungsgemeinschaft (DFG) -- 398436923 (RTG2491).

\section{A family of spherical threefolds}

We start by collecting some facts on the family $X_n$. Throughout, let $n\ge 2$ be an integer. The Picard group of the desingularization $\pi_n \colon X_n \to X'_n$ is $\Pic X_n \cong \Zd^2$, generated by the strict transform of the coordinate plane $V(z)$ and the exceptional divisor. 

\subsection{Cox rings and universal torsors}\label{sec:cox}
The Cox ring of $X_n$ and combinatorial data associated with it have been computed by the first author and Gagliardi~\cite[\S 2]{DG18} using constructions by Brion~\cite{bri07} and Gagliardi~\cite{gag14}; it is
\begin{equation*}
  R(X_n) = \Qd[a,b,c,d,z,w]/(ad-bc-z^{n+1}w),
\end{equation*}
and its grading is given by
\begin{equation*}
  \deg(a)=\deg(c)=\deg(z)=(1,0),\ \deg(b)=\deg(d)=(n,1),\ \text{and}\ \deg(w)=(0,1)
\end{equation*}
so that the relation has degree $(n+1,1)$. Hence, the anticanonical class is
\begin{equation*}
  -K_{X_n} = (n+2,2)
\end{equation*}
by the adjunction formula. As
\[
    \pi_n^*(-K_{X'_n}) = (n+2,1+2/n) = \frac{n+2}{n}(n,1),
\]
the resolution $\pi_n$ is not crepant if $n\ge 3$.

To construct a universal torsor, $X_n$ can be represented as a hypersurface in the toric variety $Z_n$ that is obtained by blowing up the singular locus of $Z_n' = \Pd_\Qd(1,n,1,n,1)$. The fan of $Z_n'$ consists of all cones spanned by at most four of the vectors
\begin{equation*}
    u_a=e_1,\ u_b=e_2,\ u_c=e_3,\ u_d=e_4,\ \text{and}\ u_z=-e_1-ne_2-e_3-ne_4
\end{equation*}
in $\Zd^4 \cong \Zd^5/\Zd(1,n,1,n,1)$. The singular locus corresponds to the cone spanned by $u_a,u_c,u_z$, and subdividing it by adding the ray generated by $u_w=(u_a+u_c+u_z)/n$ yields the fan $\Sigma_n$ of $Z_n$. One can check that the maximal cones of $Z_n$ are generated by any two of $u_a,u_c,u_z$ together with any two of $u_b,u_d,u_w$. Hence, the irrelevant ideal is generated by the products of one of $a,c,z$ and one of $b,d,w$.

The spherical variety $X_n$ is the hypersurface defined by $ad-bc-z^{n+1}w$ in $Z_n$, and its integral model $\Xm_n$ is described by the same equation in the toric scheme $\Zm_n$ over $\Zd$ described by the same fan \citelist{\cite{MR1679841}*{p.~187} \cite{MR284446}}, the blow-up of $\Pd_\Zd(1,n,1,n,1)$ in $V(a,c,z)$. Salberger \cite[Remark~8.6b]{MR1679841} constructs universal torsors $\Ad^6 \setminus (V(a,c,z)\cup V(b,d,w))$ over $\Qd$ and $\Zd$ of this toric variety and scheme. Restricting these to the hypersurfaces $X_n$ and $\Xm_n$ results in the universal torsor 
\[
  \rho \colon \Spec R(X_n) \setminus (V(a,c,z)\cup V(b,d,w))= Y_n \to X_n
\]
and its model $\rho\colon \Ym_n \to \Xm_n$ with
\[
  \Ym_n = 
  \Spec(\Zd[a,b,c,d,z,w]/(ad-bc-z^{n+1}w)) \setminus (V(a,c,z)\cup V(b,d,w)).
\]

As in \cite{DG18}, rational points $x$ on $X_n$ can thus be represented in \emph{Cox coordinates}, that is, as an equivalence class $x=(a:b:c:d:z:w)$ of rational numbers $a$, $b$, $c$, $d$, $z$, and $w$ satisfying
\begin{equation*}
    (a,c,z),(b,d,w)\ne (0,0,0)
\end{equation*}
and $ad-bc=z^{n+1}w$ that share an orbit under the action of $\Gdm^2(\Qd) = (\Qd^\times)^2$ encoded in the grading of the Cox ring.
Alternatively, and more conveniently for counting purposes, they are represented by \emph{integers} that, in addition to the equation, satisfy the coprimality condition 
\begin{equation}\label{eq:gcd}
  \gcd(a,c,z)=\gcd(b,d,w)=1,
\end{equation}
with their equivalence being induced by the action of the subgroup $\Gdm^2(\Zd) = \{\pm 1\}^2$. The former representation is that of a point on $X_n$ by an orbit in $Y_n(\Qd)$, the latter by an orbit in $\Ym_n(\Zd)$.

\subsection{Admissible boundaries}\label{sec:boundaries}
Our next aim is to identify $G$-invariant divisors $D$ with strict normal crossings such that the log anticanonical bundle is ample. Recall from~\cite{DG18} that the action of $G = \SL_2 \times \Gdm$ on $X_n'$ is defined by
\begin{equation*}
    \left(\begin{pmatrix}a'&b'\\c'&d'\end{pmatrix},t\right)\cdot 
    \left(\begin{pmatrix}a&b\\c&d\end{pmatrix},z\right)=
    \left(
      \begin{pmatrix}a'&b'\\c'&d'\end{pmatrix}\cdot \begin{pmatrix}a&b\\c&d\end{pmatrix} \cdot \begin{pmatrix}t&0\\0&t^{-1}\end{pmatrix},z
    \right).
\end{equation*}
Observe that the only $G$-invariant prime divisors are $D_z=V(z)$ and $D_w=V(w)$; neither of the classes $-K_{X_n}=(n+2,2)$ and $-K_{X_n}-[D_z] = (n+1,2)$ are ample (or even nef if $n\ge 3$), while both $-K_{X_n}-[D_w]$ and $-K_{X_n}- [D_w]-[D_z] $ are.

Hence, $D$ is $G$-invariant and $-K_X-D$ ample if and only if
\begin{equation}\label{eq:G-invariant-D}
    D\in \{D_w, D_w+D_z\}.
\end{equation}
For a fixed such divisor, denote by $U = X_n \setminus D$ its complement, by $\Dm = \overline{D}$ its closure in $\Xm_n$, and by $\Um = \Xm_n \setminus \Dm$ the induced model.

\begin{remark}\label{rmk:equivariant}
  The varieties $X'_n$, $X_n$, and $X_n \setminus D_w$ are (partial) equivariant compactifications of $\Gda^3$. Indeed, a $\Gda^3$-action on $X_n'$ is given by
  \begin{equation*}
      (t_1,t_2,t_3)\cdot(a:b:c:d:z) = (a':b':c':d':z')
  \end{equation*}
  where $a'=a$, $b'=b+t_1a^n$, $c'=c+t_2a$, $z'=z+t_3a$, and
  \begin{equation*}
      d'=d+t_1t_2a^n+t_1a^{n-1}c+t_2b+\sum_{i=0}^n\tbinom{n+1}{i}t_3^{n-i+1}a^{n-i}z^i, 
  \end{equation*}
  which is $(z'^{n+1}+b'c')/a'$ if $a\ne 0$. It fixes the singular locus $V(a,c,z) \cong \Pd^1_\Qd$ and extends to the desingularization $X_n$ by letting $\Gda^3$ act trivially on the second factor of the exceptional divisor $D_w$, which turns out to be isomorphic to $V(a,c,z) \times \Pd^1_\Qd$.
  Hence, for the boundary divisor $D=D_w$, the asymptotic number of integral points on $U$ of bounded log anticanonical height is known by work of Chambert-Loir and Tschinkel~\cite{CLT12};
  however, the case of general ample height functions remained open so far.
  
  The varieties $X_n \setminus (D_w \cup D_z)$, on the other hand, have no such structure as a partial equivariant compactification of a vector group. Indeed, if one of them were a compactificaton of $\Ad^3$, the inclusion $\Ad^3\to X_n\setminus (D_w\cup D_z)$ of the open orbit would spread out to one over sufficiently large primes, leading to at least $p^3$ points modulo $p$ away from the divisors for such primes, more than the actual number $p^3-p$ of such points obtained in~\eqref{eq:points-away-from-Dw-and-Dz}.
\end{remark}

\begin{remark}
  The divisors $V(a)$ and $V(b)$ are not invariant under $G$, and hence their complement is not spherical.
  As they are invariant under a Borel subgroup, they still seem to be natural candidates for components of $D$.
  Nevertheless, no combinations of $V(a)$, $V(b)$, and possibly $D_z$ and $D_w$ that have strict normal crossings result in an ample log anticanonical class.
  On the singular variety $X_n'$, the empty divisor results in an ample (log) anticanonical class.
  As the Picard group of $X_n'$ is one-dimensional, all height functions are equivalent to a power of the anticanonical one; hence, this case is completely treated by \cite{CLT12} and \cite{DG18}.
  The only $G$-invariant prime divisor on $X'_n$ is $V_{X'_n}(z)$, whose preimage is $D_w+D_z$, a divisor that is included in our analysis.
\end{remark}

\subsection{Polarizations}\label{sec:polarizations}

The nef cone of $X_n$ is spanned by the degrees $(n,1)$ and $(1,0)$ of generators of the Cox ring that can easily be verified to correspond to base point free divisors. In particular, the ample cone is its interior
\begin{equation*}
  \Ample(X_n) = \Cone((n,1), (0,1))^\circ.
\end{equation*}
Hence, any ample $\Qd$-divisor $L$ has the shape
\begin{equation*}
    L = L_{(l_1,l_2)} = (l_1 +nl_2, l_2)\quad\text{for $(l_1,l_2)\in \Qd_{>0}^2$.}
\end{equation*}
Let $\Xc = (X_n, D, L)$ be a triple consisting of one of the spherical varieties $X_n$ with $n\ge 2$, one of the $G$-invariant divisors $D$ obtained in~\eqref{eq:G-invariant-D}, and an ample line bundle $L$ as above.
The opposite 
\[
  a(\Xc) = \inf\{t\in \Rd \mid K_{X_n} + D + tL \in \Eff(X_n)\}
\]
of the Kodaira energy is readily computed as
\begin{equation}\label{eq:a-invariant-Dw}
  a(\Xc) = \max\left\{\frac{1}{l_2}, \frac{n+2}{l_1+nl_2}\right\} = \begin{cases}
      \frac{1}{l_2} & \text{if }l_1 > 2l_2,\\
      \frac{n+2}{l_1+nl_2} & \text{if }l_1\le 2l_2,\\
  \end{cases}
\end{equation}
if $D=D_w$, and
\begin{equation}\label{eq:a-invariant-DwDz}
     a(\Xc) = \max\left\{\frac{1}{l_2}, \frac{n+1}{l_1+nl_2}\right\} = \begin{cases}
      \frac{1}{l_2} & \text{if }l_1 > l_2,\\
      \frac{n+1}{l_1+nl_2} & \text{if }l_1\le l_2,\\
  \end{cases}
\end{equation}
if $D=D_w+D_z$. The adjoint class
\begin{equation}\label{eq:def-adjoint}
  E_\Xc = K_{X_n}+D+a(\Xc)L
\end{equation}
attaining this minimum is
\begin{equation}\label{eq:adjoint-1}
  E_\Xc = \begin{cases}
    (\frac{l_1}{l_2} - 2, 0) & \text{if } l_1>2l_2,\\
    (0, \frac{2l_2-l_1}{l_1+nl_2}) & \text{if }  l_1\le 2l_2
  \end{cases}
\end{equation}
if $D=D_w$, and 
\begin{equation}\label{eq:adjoint-2}
  E_\Xc = \begin{cases}
    (\frac{l_1}{l_2} - 1, 0) & \text{if } l_1>l_2, \\
    (0, \frac{l_2-l_1}{l_1+nl_2}) & \text{if } l_1\le l_2,
  \end{cases}
\end{equation}
if $D=D_w+D_z$.
In the latter cases, the adjoint class is trivial or a positive multiple of the class $(0,1)$ of the exceptional divisor of $\pi_n$, hence rigid. In the former cases, it is a positive multiple of the base point free class $(1,0)$, which is the degree of the monomials $a$, $c$, and $z$ in the Cox ring. This class induces the Iitaka fibration
\begin{equation}\label{eq:iitaka}
\phi\colon X_n \to \Pd^2,\ (a:b:c:d:z:w)\mapsto (a:c:z).
\end{equation}

Finally, the Fujita invariant $b(\Xc)$ is the codimension of the minimal face of the effective cone containing $E_\Xc$. It is thus $2$ if and only if both entries in the respective maximum in~(\ref{eq:a-invariant-Dw},\,\ref{eq:a-invariant-DwDz}) are equal and the corresponding adjoint bundle is trivial, and $1$ in any other case:
\begin{equation}\label{eq:b-invariant}
    b(\Xc) = \begin{cases}
  2 & \text{if $D=D_w$ and $l_1=2l_2$,}\\
  2 & \text{if $D=D_w+D_z$ and $l_1 = l_2$, and}\\
  1 & \text{otherwise.}
  \end{cases}
\end{equation}
\subsection{Height functions}\label{sec:height-functions}
For a polarization $L = (l_1+nl_2, l_2)$, the map
\begin{equation}\label{eq:height_torsor}
  \begin{aligned}
  H_L(a,b,c,d,z,w) = \max\left\{\begin{aligned}
      &\max\{|a|,|c|,|z|\}^{l_1}\max\{|b|,|d|\}^{l_2},\\ &\max\{|a|,|c|,|z|\}^{l_1+nl_2}|w|^{l_2}
  \end{aligned}\right\}
  \end{aligned}
\end{equation}
induces a height function on $X_n$ associated with $L$ by setting
\[
  H_L(x) = H_L(a,b,c,d,z,w)
\] for a point $x = (a:b:c:d:z:w)\in X_n(\Qd)$ represented in integral Cox coordinates $(a,b,c,d,z,w)\in \Ym_n(\Zd)$, in particular satisfying the coprimality condition~\eqref{eq:gcd}. Indeed, if $l_1$ and $l_2$ are both integers, then the set
\begin{equation*}
  \{\{a,c,z\}^{l_1}\{b,d\}^{l_2},\{a,c,z\}^{l_1+nl_2}w^{l_2}\}
\end{equation*}
of monomials of degree $L$ does not admit a common zero, and so these sections of a bundle of degree $L$ define a metric on $L$ and a height function; if they are not, then passing to an integral multiple $kL$ leads to the height function $H_L = H_{kL}^{1/k}$, using the linearity of the exponents in~\eqref{eq:height_torsor} in $L$. 

\section{The leading constants}\label{sec:leading-constants}
Throughout this section, let $\Xc = (X_n, D, L)$ be a polarized log variety consisting of one of the spherical varieties $X_n$ with $n\ge 2$, one of the $G$-invariant divisors $D$ obtained in~\eqref{eq:G-invariant-D}, and an ample line bundle $L=L_{(l_1, l_2)}$ metrized as in Section~\ref{sec:height-functions}.
For now, assume that $l_1 < 2l_2$ if $D=D_w$ and $l_1\le l_2$ if $D=D_w+D_z$; in particular, this means that the adjoint bundle $E_\Xc$ is rigid. We return to the moving case in Section~\ref{sec:moving-constants}.

\subsection{Archimedean Tamagawa measures}\label{sec:arch-measures}
Aiming to interpret the asymptotic formulas obtained in Section~\ref{sec:counting} geometrically, we start by describing the relevant types of Tamagawa measures. To this end, we shall make use of the chart
\begin{equation*}
  \psi \colon X_n \dashrightarrow \Ad^3,\quad (a:b:c:d:z:w)\mapsto (c/a, z/a, wa^n/b)
\end{equation*}
and its inverse
\begin{equation}\label{eq:local-coords-inverse}
  \tilde\psi\colon \Ad^3 \to X_n, \quad (c_0,z_0, w_0)\mapsto (1:1:c_0:c_0+z_0^{n+1}w_0:z_0:w_0).
\end{equation}
The descriptions~(\ref{eq:adjoint-1},\,\ref{eq:adjoint-2}) of the adjoint bundle (currently assumed to be rigid) motivate the definition
\begin{equation*}
  e_\Xc = \begin{cases}
  \frac{2l_2-l_1}{l_1+nl_2} & \text{if $D=D_w$ and} \\
  \frac{l_2-l_1}{l_1+nl_2} & \text{if $D=D_w+D_z$,} 
  \end{cases}
\end{equation*}
so that $E_\Xc = e_\Xc D_w$ (which is possibly zero); let $k>0$ be an integer such that $ke_\Xc \in \Zd$.
Appealing to~\eqref{eq:def-adjoint}, the meromorphic section
\begin{equation}\label{eq:abstract-L-section}
  1_{D_w}^{ke_\Xc}1_{D}^{-k}(\df c_0 \wedge \df z_0 \wedge \df w_0)^{-k}
\end{equation}
has degree $ka(\Xc)L$ and corresponds to the monomial
\begin{equation}\label{eq:concrete-L-section}
  w^{ke_\Xc} s_D^{-k} (a^{n-2}b^{-2})^{-k},
\end{equation}
where $s_D \in \{w, wz\}$ depending on $D$,
under suitable isomorphisms between the bundles $\Oc(D_w)$, $\Oc(D_z)$, and $\omega_X$, and the bundles whose meromorphic sections are homogeneous elements of $\Frac(R(X_n))$ of the respective degrees. (Note that up to scalars, $\df c_0 \wedge \df z_0 \wedge \df w_0$ is the unique section of the anticanonical bundle and $a^{n-2}b^{-2}$ the unique fraction of canonical degree $(-n,-2)$ with neither zeroes nor poles on $\im \tilde\psi$.)

As $\Xc$ is adjoint rigid, the adjoint divisor $E_\Xc$ is well-defined as a divisor (not just as a class). The relevant (residue) measures are supported on subvarieties corresponding to maximal faces of the analytic Clemens complex $\Cc^{\an}(D\setminus |E_\Xc|)$ of the complement of its support~\cite[after Def.~6.3]{Santens23}. As $\Cc^{\an}(D)$ is a simplex, so is this subcomplex; denote by $A\in \Cc^{\an}(D\setminus |E_\Xc|)$ its unique maximal face.

If $D=D_w$, we restrict ourselves to the case $l_1<2l_2$, in which this analytic Clemens complex is trivial with $A=\emptyset$ as its only face. Under this condition, the relevant Tamagawa measure is
\begin{equation*}
  \df \tau_{\Xc,\emptyset,\infty} =  \frac{\df c_0\, \df z_0\, \df w_0}{\norm{1_{D_w}}^{1-e_\Xc}\norm{\df c_0 \wedge \df z_0 \wedge \df w_0}},
\end{equation*}
the denominator being the $k$-th root of the norm of~\eqref{eq:abstract-L-section}. As $e_\Xc>0$ by the above assumption on $L$, this measure is finite. 
Using the correspondence with the monomial~\eqref{eq:concrete-L-section}, this measure can be expressed as
\begin{equation*}
  \frac{
    |a^{n-2}b^{-2}|^{-1}  \df c_0\, \df z_0\, \df w_0
    }{
      |w|^{1- e_\Xc} H_L(a,b,c,d,z,w)^{a(\Xc)}
    }
\end{equation*}
in Cox coordinates and as
\begin{equation*}
  \df \tau_{\Xc,\emptyset,\infty} =\frac{
    \df c_0\, \df z_0\, \df w_0
    }{
      |w_0|^{1- e_\Xc} H_L(1,1,c_0,c_0+z_0^{n+1}w_0,z_0,w_0)^{a(\Xc)}
    }
\end{equation*}
in local coordinates~\eqref{eq:local-coords-inverse}. Finally, let
\begin{equation}\label{eq:omega_infty_Dw}
  \omega_{\Xc,\infty} = \int_{X_n(\Rd)} \df \tau_{\Xc,\emptyset,\infty}
  =\int_{\Rd^3} \frac{
    \df c_0\, \df z_0\, \df w_0
    }{
      |w_0|^{1- e_\Xc} H_L(1,1,c_0,c_0+z_0^{n+1}w_0,z_0,w_0)^{a(\Xc)}
    }
\end{equation}
be the resulting volume of $X_n(\Rd)$.

Turning to $D=D_w+D_z$, the relevant analytic Clemens complex is a single vertex with maximal face $A=\{D_z\}$ if $l_1<l_2$ and a $1$-simplex with maximal face $A=\{D_z,D_w\}$ if $l_1 = l_2$.
In the former case, the Tamagawa measure
\begin{equation*}
  \df \tau_{\Xc,\{D_z\},\infty} = 2
  \frac{
    \df c_0\, \df w_0
  }{
    \norm{1_{D_w}}^{1-e_\Xc}\norm{\df c_0 \wedge \df w_0}
  }
\end{equation*}
is supported on $D_z(\Rd)$ and finite as $e_\Xc>0$. It includes a renormalization factor $2^{|A|}$. The norm of the $2$-form in the denominator is induced by one on $\omega_X(D_z)$ via the adjunction isomorphism. This isomorphism identifies $\df c_0\wedge \df w_0$ with the restriction of the $3$-form $z_0^{-1}\df c_0 \wedge \df z_0 \wedge \df w_0$. In particular, the denominator is
\begin{equation*} 
  \norm{1_{D_w}}^{1-e_\Xc} \norm{z_0^{-1}\df c_0 \wedge \df z_0 \wedge \df w_0} = \lim_{z_0\to 0} \frac{\norm{1_{D_w}}^{1-e_\Xc}\norm{1_{D_z}\df c_0 \wedge \df z_0 \wedge \df w_0}}{|z_0|}.
\end{equation*} 
Once more using the correspondence between~\eqref{eq:abstract-L-section} and~\eqref{eq:concrete-L-section}, changing into local coordinates, and passing to the limit, the measure can thus be explicitly described as
\begin{equation*}
  \df \tau_{\Xc,\{D_z\},\infty} = 2\frac{
    \df c_0\, \df w_0
    }{
      |w_0|^{1- e_\Xc} H_L(1,1,c_0,c_0,0,w_0)^{a(\Xc)}
    }
\end{equation*}
in local coordinates; denote by
\begin{equation}\label{eq:omega_infty_DwDz_1}
  \omega_{\Xc,\infty} = \int_{D_z(\Rd)} \df \tau_{\Xc,\{D_z\},\infty}
  = 2\int_{\Rd^2} \frac{
    \df c_0\, \df w_0
    }{
      |w_0|^{1- e_\Xc} H_L(1,1,c_0,c_0,0,w_0)^{a(\Xc)}
    }
\end{equation}
the total volume.

If $l_1=l_2$, that is, if $E_\Xc=0$, this measure is infinite. However, as the analytic Clemens complex is larger in this case, the relevant measure is a residue measure on the real points of the boundary stratum $Z_A = D_z\cap D_w$ corresponding to the maximal face $A=\{D_w,D_z\}$ of the Clemens complex. Analogously to the previous cases, it can be determined to be
\begin{equation*}
  \df \tau_{\Xc,\{D_w,D_z\},\infty} = 4 \frac{
    \df c_0
    }{
      H_L(1,1,c_0,c_0,0,0)^{a(\Xc)}
    },
\end{equation*}
again including the renormalization factor $2^{|A|}$; once more, let
\begin{equation}\label{eq:omega_infty_DwDz_2}
  \omega_{\Xc,\infty} = \int_{D_w\cap D_z (\Rd)} \df \tau_{\Xc,\{D_w,D_z\},\infty}= 4 \int_{\Rd}\frac{
    \df c_0
    }{
      H_L(1,1,c_0,c_0,0,0)^{a(\Xc)}
    }
\end{equation}
be the total volume.

\subsection{Non-archimedean Tamagawa measures}

For finite places, the measures of interest
\begin{equation*}
  \df\tau_{\Xc,p} = \frac{\norm{1_{D_w}}^{e_\Xc}_p \df c_0\, \df z_0\, \df w_0}{\norm{1_{D}}_p\norm{\df c_0 \wedge \df z_0 \wedge \df w_0}_p}
\end{equation*}
are supported on $\Um(\Zd_p)$,
and we denote by
\begin{equation*}
  \omega_{\Xc,p} = \lambda_{\Xc,p} \int_{\Um(\Zd_p)} \df\tau_{\Xc,p}
\end{equation*}
the volume of the set of $\Zd_p$-integral points with a convergence factor $\lambda_{\Xc,p}$ as in \cite[Def.~2.8]{CLT10}.
The sections $1_{D_w}$, $1_{D_z}$, and $\df c_0\wedge \df z_0\wedge \df w_0$ are integral and primitive; as the same holds true for $w$, $z$, and $a^{n-2}b^{-2}$, the isomorphisms appearing in the beginning of Section~\ref{sec:arch-measures} spread out to the models $\Xm_n$ and $\Xm_n\setminus V(ab)$, respectively. As a consequence of the coprimality conditions on the torsor, the $p$-adic analogue of the height~\eqref{eq:height_torsor} evaluates to $1$ on all $\Zd_p$-points.
In particular, the corresponding adelic metrics are induced by the model $\Xm_n$ at all finite places. Consequently, evaluating $\norm{1_{D}}_p$ in any $\Zd_p$-point on $\Um$ results in $1$; as $E_\Xc\subset D$ for all $n$, $D$, and $L$, the same holds true for $\norm{1_{D_w}}_p$.  

The fiber over any prime $p$ is cut out by the polynomial $ad-bc-z^{n+1}w$ in the toric $\Fd_p$-variety described by $\Sigma_n$. Its universal torsor is \'etale locally trivial, hence \'etale locally isomorphic to $(\Zm_n)_{\Fd_p} \times \Gd^2_{\mathrm{m},\Fd_p}$. The preimage of the subvariety $(\Xm_n)_{\Fd_p}$ is thus \'etale locally isomorphic to $(\Xm_n)_{\Fd_p} \times \Gd^2_{\mathrm{m},\Fd_p}$. This last variety is readily checked to be smooth outside $V(a,c,z)$ using the Jacobi criterion, and the same criterion then implies the smoothness of its factor $(\Xm_n)_{\Fd_p}$.
Therefore, the identity
\begin{equation*}
\omega_{\Xc,p} = \lambda_{\Xc,p} \frac{\#\Um(\Fd_p)}{p^3}
\end{equation*}
that a priori holds for almost all primes $p$ (see e.g.~\cite[\S\,2.4.1]{CLT10}) in fact holds for all of them.
\begin{lemma}
  The number of $\Fd_p$-points on the integral models is
\begin{equation*}
  \#\Xm_n(\Fd_p) = p^3 + 2p^2 +2p+ 1.
\end{equation*}
\end{lemma}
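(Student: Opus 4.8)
\emph{Proof plan.}
The plan is to count through the universal torsor $\rho\colon\Ym_n\to\Xm_n$. Since this is a torsor under $\Gdm^2$, every $\Fd_p$-point of $\Xm_n$ lifts to an $\Fd_p$-point of $\Ym_n$: the obstruction lies in $H^1(\Fd_p,\Gdm^2)$, which vanishes by Hilbert's Theorem~90. The fibre over such a point is then a trivial torsor under $\Gd^2_{\mathrm{m},\Fd_p}$ and has exactly $(p-1)^2$ points, so $\#\Xm_n(\Fd_p)=\#\Ym_n(\Fd_p)/(p-1)^2$, and it suffices to show $\#\Ym_n(\Fd_p)=(p^3-1)(p^2-1)$.

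By construction, $\Ym_n(\Fd_p)$ consists of the sextuples $(a,b,c,d,z,w)\in\Fd_p^6$ with $ad-bc=z^{n+1}w$ and $(a,c,z),(b,d,w)\ne(0,0,0)$. I would first count \emph{all} solutions of the equation in $\Fd_p^6$ by conditioning on the value $t=ad-bc$. The number of $(a,b,c,d)$ with $ad-bc=t$ equals $\#\SL_2(\Fd_p)=p^3-p$ when $t\ne0$ and $p^4-\#\mathrm{GL}_2(\Fd_p)=p^3+p^2-p$ when $t=0$; the number of $(z,w)$ with $z^{n+1}w=t$ equals $p-1$ when $t\ne0$ (then $z$ is any nonzero element and $w$ is determined) and $2p-1$ when $t=0$ (then $z=0$ or $w=0$). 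The exponent $n+1$ is irrelevant here, since $z^{n+1}=0$ in the field $\Fd_p$ exactly when $z=0$. Multiplying these counts and summing over $t$ gives
\[
  \#\{(a,b,c,d,z,w)\in\Fd_p^6 : ad-bc=z^{n+1}w\}=(p-1)^2(p^3-p)+(2p-1)(p^3+p^2-p)=p^5+p^3-p^2.
\]

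It remains to remove the locus $V(a,c,z)\cup V(b,d,w)$. By inclusion–exclusion, one subtracts the solutions with $a=c=z=0$ and those with $b=d=w=0$ --- in both cases the equation holds identically, so there are $p^3$ of each --- and adds back the single common solution, the origin. Hence $\#\Ym_n(\Fd_p)=p^5+p^3-p^2-2p^3+1=p^5-p^3-p^2+1=(p^3-1)(p^2-1)$, and dividing by $(p-1)^2$ yields $(p^2+p+1)(p+1)=p^3+2p^2+2p+1$, as claimed.

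Every step is a routine finite-field computation; the only points that require a little care are the passage through the torsor --- to pin down the exact factor $(p-1)^2$ --- and the (pleasant) observation that the outcome is independent of $n$, because over a field the power map $z\mapsto z^{n+1}$ only detects whether $z$ vanishes.
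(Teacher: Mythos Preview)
Your proof is correct; the torsor argument and the finite-field counts all check out, including the factorization $(p^3-1)(p^2-1)$ and the final division by $(p-1)^2$.

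The paper in fact mentions exactly your approach in its first sentence --- ``counting points on the universal torsor and dividing by $\#\Gdm^2(\Fd_p)=(p-1)^2$'' --- but then opts to spell out a different route via the Lefschetz trace formula: it determines the $\ell$-adic Betti numbers of $X_n$ (using Poincar\'e duality, the exponential sequence together with rationality and Kodaira vanishing to kill $H^3$, and smooth base change) to be $1,0,2,0,2,0,1$ with trivial Galois action, from which the point count follows. Your elementary count is fully self-contained and makes the independence of $n$ transparent; the paper's cohomological argument trades hands-on computation for heavier machinery but yields the Betti numbers as a by-product and explains structurally why the answer is a polynomial in $p$.
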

\begin{proof}
  This number can be obtained elementarily by counting points on the universal torsor and dividing by $\#\Gdm^2(\Fd_p) = (p-1)^2$ to obtain the number of fibers or from an application of the Lefschetz trace formula as in~\cite[Lem.~2.2.4]{Peyre95Duke}:
  Peyre's arguments yield
  \begin{align*}
    H_{\et}^0(X_{\Qbar},\Qd_l)&\cong H_{\et}^6(X_{\Qbar},\Qd_l) \cong \Qd_l,\\
    H_{\et}^1(X_{\Qbar},\Qd_l)&\cong H_{\et}^5(X_{\Qbar},\Qd_l) =0, \quad\text{and}\\
    H_{\et}^2(X_{\Qbar},\Qd_l)&\cong H_{\et}^4(X_{\Qbar},\Qd_l)=\Pic(X_{\Qbar})\otimes_\Zd \Qd_l \cong \Qd_l^2
  \end{align*}
  with a trivial action by the Galois group using Poincaré duality, leaving only degree $3$. 
  The exponential sequence results in an exact sequence
  \[
    \Br(X(\Cd)) = H^2(X(\Cd), \Oc^\times_{X(\Cd)}) \to H^3(X(\Cd),\Zd) \to H^3(X(\Cd),\Oc_{X(\Cd)});
  \]
  the group on the left is trivial as $X_\Cd$ is rational, and the one on the right is so by Kodaira vanishing.
  Hence, the cohomology group in the middle vanishes. Then so does $l$-adic cohomology
  $H_\et^3(X_{\Qbar}, \Qd_l)$ for any prime $l$.
  The smooth base change theorem now implies that these groups coincide with the $l$-adic cohomology groups $H_{\et}^i((\Xm_n)_{\overline{\Fd_p}}, \Qd_l)$ for $l\ne p$.
\end{proof}

For the following arguments to count points on the two relevant open subvarieties,
let $\Dm_w = \overline{D_w}$ be the closure in $\Xm$, and analogously for the remaining generators of the Cox ring.
The map
\begin{equation*}
  \Dm_w \to \Pd^1,\quad (a:b:c:d:z:0)\mapsto(b:d)
\end{equation*}
makes $\Dm_w$ an (in fact trivial) $\Pd^1$-bundle over $\Pd^1$. Hence, $\#\Dm_w(\Fd_p)=(p+1)^2$ and
\begin{equation*}
  \frac{\#\Um(\Fd_p)}{p^3}
  = \frac{\#\Xm_n(\Fd_p) - \# \Dm_w(\Fd_p) }{p^3}
  = 1+\frac{1}{p}
\end{equation*}
if $D=D_w$.

As integral points on $D_w$ and on $D_z$ satisfy the same torsor equation with symmetric coprimality conditions, their numbers are equal. The intersection $D_w\cap D_z$ is a line on $D_w$ with $p+1$ points, whence
\begin{equation}\label{eq:points-away-from-Dw-and-Dz}
  \frac{\#\Um(\Fd_p)}{p^3}
  = \frac{\#\Xm_n(\Fd_p) - \# \Dm_w(\Fd_p)-\# \Dm_z(\Fd_p) + \# (\Dm_w\cap\Dm_z)(\Fd_p) }{p^3}
  = 1 - \frac{1}{p^2}
\end{equation}
if $D=D_w+D_z$.

Finally, as the Galois group acts trivially, the convergence factors are simply
\begin{equation*}
    \lambda_{\Xc,p} = \left(1 - \frac 1 p\right)^{\rk \Pic U},
\end{equation*}
resulting in
\begin{equation}\label{eq:omega_p}
    \omega_{\Xc,p} = 1 - \frac{1}{p^2}
\end{equation}
for both choices of $D$.

\subsection{Adelic Tamagawa measures}

Again denoting by $A$ the unique maximal face of the analytic Clemens complex, multiplying these local measures results in the adelic Tamagawa measure 
\begin{equation*}
    \tau_\Xc = \tau_{\Xc,A,\infty} \prod_{p} \lambda_{\Xc,p}\tau_{\Xc, p}
\end{equation*}
on $X(\Ad_\Qd)$. It is supported on $Z_A(\Rd) \times \prod_p \Um(\Zd_p)$, where $Z_A = \bigcap_{E\in A}E$ is the boundary stratum corresponding to the face $A$.
The total volume of the space of adelic points is
\begin{equation*}
\omega_\Xc = \tau_\Xc(X(\Ad_\Qd)) = \omega_{\Xc,\infty}\prod_p\omega_{\Xc, p}.
\end{equation*}

\subsection{Effective cones}\label{sec:alpha}

The remaining ingredient for asymptotic formulas is a constant $\alpha(\Xc)$ related to the geometry of effective cones. As the base field $\Qd$ only has one archimedean place, the relevant cone resides in the Picard group $\Pic(U_A)$ of the subvariety $U_A = X\setminus \bigcap_{E\subseteq A}E$ associated with the unique maximal face $A$ of the analytic Clemens complex $\Cc^\an(D\setminus |E_\Xc|)$.

If $L$ is a multiple of the log anticanonical bundle, this Picard group is $\Pic(X_n)\cong \Zd^2$ with basis $([D_w],[D_z])$; its effective cone is smooth with the same basis (as it is generated by the classes of the generators of the Cox ring), and thus the 
representation $L=l_2[D_w]+(l_1+nl_2)[D_z]$ results in
\begin{equation}\label{eq:alpha_loganticanonical}
  \alpha(\Xc) = \frac{1}{(l_1+nl_2)l_2};
\end{equation}
see e.g.~\cite[Rmk.~2.2.9]{wilsch-toric}.

Otherwise, $[D_z]$ is a basis of both $\Pic(U_a) = \Pic(X_n\setminus D_w)$ and its smooth effective cone, and 
$L|_{X_n\setminus D_w} = (l_1+nl_2)[D_z]$ results in
\begin{equation}\label{eq:alpha_not_loganticanonical}
  \alpha(\Xc) = \frac{1}{l_1+nl_2}.
\end{equation}

\begin{remark}
  The Picard group $\Pic(U_A)$ corresponds to $\Pic(U;A)$ in the notation of~\cite[Def.~2.2.1, Rmk.~2.2.9\,(i)]{wilsch-toric} and $\Pic((X;A)_L)$ in the notation of~\cite[\S\,6.2]{Santens23}.
  The constant $\alpha(\Xc)$ coincides with $\theta((X; A),L)/(b(\Xc)-1)!$ as in~\cite[Def.~6.10]{Santens23} (but note that $(b(\Xc)-1)!=1$ in any case).
\end{remark}

\subsection{Moving adjoint divisors}\label{sec:moving-constants}
Finally, let $l_1>2l_2$ if $D=D_w$ and $l_1>l_2$ if $D=D_w+D_z$; that is, assume that the adjoint divisor $E_\Xc$ is not rigid.
For each point $t = (a:c:z)\in \Pd^2(\Zd)$ represented by coprime integers, the fiber $\Um_t = \phi^{-1}(t)\subseteq \Um$ is either empty
(for $D=D_w$, this happens if $\gcd(a,c)\ne 1$; for $D=D_w+D_z$, this happens if $\gcd(a,c)\ne 1$ or $z\ne \pm 1$)
or isomorphic to $\Ad^1_\Zd$. On each nonempty fiber, constructions analogous to the ones above, using the metrized bundle $L|_{\Um_t}$ in place of $L$, result in Tamagawa volumes $\omega_{\Xc,v}(t)$ (supported on $X_t \cap D_w$ for the archimedean place $v=\infty$) and effective cone constants $\alpha(\Xc_t)$. If a fiber is empty, define these numbers to be $0$.

\section{Counting}\label{sec:counting}

Keep the notation of the previous section; in particular, let $\Xc=(\Xm_n, D, L)$ be a polarized log variety consisting of the model $\Xm_n$ of the spherical variety $X_n$ for an integer $n\ge 2$, one of the $G$-invariant divisors $D\in\{D_w, D_w + D_z\}$ that make $(X_n,D)$ log Fano, and an ample line bundle $L=(l_1+nl_2, l_2)$. These data are treated as fixed, and all implied constants are allowed to depend on them. We are interested in the asymptotic behavior of the counting function
\begin{equation*}
  N_{\Xc,\emptyset}(B) = \#\{x \in \Um(\Zd) \mid H_L(x)\le B\}
\end{equation*}
for $B\ge 2$.
By the description of rational points in Section~\ref{sec:cox}, this function can be expressed more explicitly as
\begin{equation}\label{eq:counting-concrete}
  N_{\Xc,\emptyset}(B) =
  \begin{cases}
    \frac{1}{2} \#\left\{(a, b, c, d, z) \in \Zd^5\ \middle|\ \substack{ad-bc=z^{n+1},\\ \gcd(a,c) = 1,\\ H_L(a,b,c,d,z,1) \le B}\right\} & \text{if } D=D_w, \text{ and}\\[1em]
    \#\left\{(a, b, c, d) \in \Zd^4\ \middle|\  \substack{ad-bc=1,\\ \gcd (a, c)=1,\\ H_L(a, b, c, d,1,1) \le B}\right\}  & \text{if } D=D_w+D_z.
  \end{cases}
\end{equation}
Indeed, integral points are precisely those that satisfy $w=\pm1$ and $w=\pm1$, $z=\pm 1$, respectively. Setting these coordinates to $1$ turns the $4$-to-$1$-correspondence for rational points into a $2$-to-$1$-correspondence and $1$-to-$1$-correspondence, respectively. In particular, the restraint $w=1$ makes the second coprimality condition in~\eqref{eq:gcd} automatic, while the torsor equations $ad-bc=1$ and $ad-bc=z^{n+1}$ (together with $\gcd(a,c,z)=1$ in the latter case) imply 
$\gcd(a,c)=1$, which is stronger than the coprimality condition in~\eqref{eq:gcd}.

\subsection{The first summation}

We begin by estimating the number of solutions involving a fixed vector $(a,c,z)\in \Zd^3$ by means of a one-dimensional volume. Concretely, for a point $t=(a:c:z)\in \Pd^2(\Qd)$ represented by a primitive vector $(a,c,z)\in \Zd^3_\prim$, let $N_{\Xc_t,\emptyset}(B)$ be the number of integral points in the fiber $\phi^{-1}(t)$ of~\eqref{eq:iitaka} of $L$-height at most $B$, that is, the number of points in the respective set in~\eqref{eq:counting-concrete} with fixed $a$, $c$, and $z$. To define a volume function $ V_\Xc(a, c, z; B)$ for $(a,c)\in \Rd^2\setminus \{(0,0)\}$, $z\in \Rd$, and $B\ge 2$, let
\begin{equation*}
    V_\Xc(a, c, z; B) = \int_{|b|, |a^{-1}(z^{n+1}+bc)| \le B^{1/l_2}\max\{|a|, |c|, |z|\}^{-l_1/l_2}}\frac{\df b}{|a|}
\end{equation*}
if $a\ne 0$, and $V_\Xc(0, c, z; B) = V_\Xc(c, 0, z; B)$. Note that for $(a,c)\in \Rd^2\setminus(0,0)$, the linear change of variables $d=a^{-1}(z^{n+1}+bc)$  --- or the definition if $a=0$ --- yields the identity $V_\Xc(a, c, z; B) = V_\Xc(c, a, z; B)$.

\begin{lemma}\label{lem:summation-b}
    For $t = (a:c:z)\in \Pd^2(\Qd)$ represented by coprime integers $a$, $c$, and $z$, the number of integral points of bounded height in the fiber $\phi^{-1}(t)$ is
    \begin{equation*}
      N_{\Xc_t,\emptyset} (B)= V_\Xc(a,c,z;B) + O(1)
    \end{equation*}
    whenever it is nonzero, that is, whenever $D=D_w$ and $\gcd(a,c)=1$ or $D=D_w+D_z$, $\gcd(a,c)=1$, and $z\in \Zd^\times$.

    Moreover, the total number of integral points of bounded height is
    \begin{equation}\label{eq:after-first-summation-1}
        N_{\Xc,\emptyset}(B) = \frac{1}{2}\sums{a,c,z \in \Zd\\|a|,|c|,|z| \le B^{1/(l_1+nl_2)}\\\gcd(a,c)=1} V_\Xc(a,c,z;B) + O(B^{3/(l_1+nl_2)})
    \end{equation}
    if $D=D_w$, and
    \begin{equation}\label{eq:after-first-summation-2}
        N_{\Xc,\emptyset}(B) = \sums{a,c \in \Zd\\|a|,|c| \le B^{1/(l_1+nl_2)}\\\gcd(a,c)=1} V_\Xc(a,c,1;B) + O(B^{2/(l_1+nl_2)}),
    \end{equation}
    if $D=D_w+D_z$.
\end{lemma}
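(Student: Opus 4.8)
The plan is to count lattice points on the affine line defined by the torsor equation with one fixed triple $(a,c,z)$ (or pair $(a,c)$), comparing this count to the one-dimensional volume $V_\Xc$, and then to sum the resulting error terms over all admissible triples. First I would treat the fiberwise count $N_{\Xc_t,\emptyset}(B)$ for fixed coprime $a$, $c$, $z$. When $D=D_w$ and $\gcd(a,c)=1$ (respectively $D=D_w+D_z$, $\gcd(a,c)=1$, $z\in\Zd^\times$), the equations $ad-bc=z^{n+1}$ (resp.\ $ad-bc=1$) together with coprimality of $a$ and $c$ guarantee that for every integer $b$ in the relevant range with $b\equiv z^{n+1}\bar{a}^{-1}\pmod{?}$--- more precisely, the condition $ad-bc=z^{n+1}$ has a unique solution $d\in\Zd$ precisely when $a\mid z^{n+1}+bc$, which, since $\gcd(a,c)=1$, pins $b$ down in a single residue class modulo $|a|$ (and then $d$ is determined). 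Counting integers $b$ in an interval of length $2B^{1/l_2}\max\{|a|,|c|,|z|\}^{-l_1/l_2}/|a|$ lying in a fixed residue class mod $|a|$ gives $V_\Xc(a,c,z;B)+O(1)$: the interval length divided by the modulus is exactly the integral defining $V_\Xc$ after the change of variables $d=a^{-1}(z^{n+1}+bc)$, and the floor/ceiling discrepancy contributes the $O(1)$. The symmetric case $a=0$ (so $|c|=1$, forcing $c=\pm1$) is handled directly and matches $V_\Xc(0,c,z;B)=V_\Xc(c,0,z;B)$; the boundary between the two pieces of the max in the height is absorbed into $O(1)$ as well.

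Next I would pass to the total count by summing this asymptotic over all admissible $(a,c,z)$. By~\eqref{eq:counting-concrete}, $N_{\Xc,\emptyset}(B)$ equals $\tfrac12$ times the sum of $N_{\Xc_t,\emptyset}(B)$ over coprime $(a,c)$ and all $z$ with the height constraint (for $D=D_w$), respectively the sum over coprime $(a,c)$ with $z=1$ (for $D=D_w+D_z$). The height function~\eqref{eq:height_torsor} forces $\max\{|a|,|c|,|z|\}^{l_1+nl_2}|w|^{l_2}\le B$, and since $|w|=1$ on integral points this gives $\max\{|a|,|c|,|z|\}\le B^{1/(l_1+nl_2)}$, which is the range appearing in~\eqref{eq:after-first-summation-1} and~\eqref{eq:after-first-summation-2}; moreover $N_{\Xc_t,\emptyset}(B)=0$ outside this range (and the fiber is empty for the excluded congruence conditions), so the sum over $t$ is genuinely finite and supported exactly on the indicated set. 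The main point is then bounding the accumulated error: each nonempty fiber contributes $O(1)$, and the number of admissible triples $(a,c,z)$ is $O(B^{3/(l_1+nl_2)})$ (respectively the number of admissible pairs $(a,c)$ is $O(B^{2/(l_1+nl_2)})$), so the total error is $O(B^{3/(l_1+nl_2)})$ (resp.\ $O(B^{2/(l_1+nl_2)})$) as claimed.

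The step I expect to require the most care is the bookkeeping in the fiberwise count: making sure that the divisibility condition $a\mid z^{n+1}+bc$ really does confine $b$ to a single residue class modulo $|a|$ (using $\gcd(a,c)=1$, which the excerpt notes is implied by the torsor equation together with the gcd conditions), and that the resulting lattice-point count in an interval of that length in that residue class is $V_\Xc(a,c,z;B)+O(1)$ \emph{uniformly} in $(a,c,z)$ — in particular with an $O(1)$ that does not secretly depend on $a$ through the modulus. This is exactly the standard estimate that the number of integers in an interval $I$ lying in a fixed arithmetic progression of common difference $q$ is $|I|/q+O(1)$ with an absolute implied constant, so no uniformity is lost; the only subtlety is the degenerate stratum $a=0$, which is why $V_\Xc$ was defined by the symmetry $V_\Xc(0,c,z;B)=V_\Xc(c,0,z;B)$ in the first place, and the identity $V_\Xc(a,c,z;B)=V_\Xc(c,a,z;B)$ recorded before the lemma lets one reduce to $a\ne0$ whenever $(a,c)\ne(0,0)$. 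Everything else — extracting the range of summation from the height, discarding empty fibers, and counting admissible triples — is routine.
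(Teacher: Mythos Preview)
Your proposal is correct and follows the same approach as the paper: fix $(a,c,z)$ with $\gcd(a,c)=1$, observe that $d\in\Zd$ forces $b$ into a single residue class modulo $|a|$ (or swap roles if $a=0$), count lattice points in the resulting interval as $V_\Xc(a,c,z;B)+O(1)$, and then sum the $O(1)$ over the $O(B^{3/(l_1+nl_2)})$ (resp.\ $O(B^{2/(l_1+nl_2)})$) admissible triples (resp.\ pairs). The only slip is the stray ``$/|a|$'' in your stated interval length; the interval for $b$ is the intersection of $|b|\le Z$ and $|a^{-1}(z^{n+1}+bc)|\le Z$, and it is its length divided by the modulus $|a|$ that equals $V_\Xc$, exactly as you say two lines later.
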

\begin{proof}
   For fixed $(a,c,z)\in \Zd^3$ with $\gcd(a,c)=1$ and $a\ne0$, the coordinate
   \[
      d=(bc+z^{n+1})/a,
   \]
   is integral if and only if $b\equiv -c^{-1}z^{n+1} \pmod a$. As a consequence, the number of $(b,d)\in\Zd^2$ satisfying $H_L(a,b,c,d,z,1)\le B$ and the torsor equation is 
   \begin{equation*}
       V_\Xc(a,c,z;B)+O(1),
   \end{equation*}
   with an analogous argument treating the case $c\ne0$, settling the first part of the lemma.
   Summing this error term over $a$ and $c$ results in
   \[
    \sums{a,c \in \Zd\\|a|,|c| \le B^{1/(l_1+nl_2)}\\\gcd(a,c)=1} 1 \ll B^{2/(l_1+nl_2)}
   \]
   and the second part of the lemma after specializing to $z=1$; further summing over $z$ results in the first part. (The factor $1/2$ stemming from above is explained by the fact that each fiber is presented by two vectors $(a,c,z)$ in the first case and by a unique vector $(a,c,1)$ in the second case.)
\end{proof}

\begin{lemma}\label{lem:compare_V_V'}
  Let $(a,c,z) \in \Rd^3$ with $(a,c) \ne (0,0)$. Let
  \begin{equation}\label{eq:V'}
    V'_\Xc(a,c,z) = \int_{|b|,|a^{-1}bc| \le \max\{|a|,|c|,|z|\}^{-l_1/l_2}}\frac{\df b}{|a|}
  \end{equation}
  if $a \ne 0$, and $V'_\Xc(0,c,z) = V'_\Xc(c,0,z)$.
  Then
  \begin{equation}\label{eq:compare-V-V'}
    V_\Xc(a,c,z;B) = V'_\Xc(a,c,z) B^{1/l_2} + 
    \begin{cases}
        O(|z|^{n+1}/|ac|) &\text{if $ac \ne 0$,}\\
        O(|z|^{n+1}/(|a|^2-|ac|)) &\text{if $|c|<|a|$,}\\
        O(|z|^{n+1}/(|c|^2-|ac|)) &\text{if $|a|<|c|$,}\\
    \end{cases}
  \end{equation}
  where each applicable error term suffices on its own whenever more than one of the conditions is met.
\end{lemma}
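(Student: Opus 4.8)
The plan is to realise both $V_\Xc(a,c,z;B)$ and $V'_\Xc(a,c,z)B^{1/l_2}$ as $|a|^{-1}$ times the Lebesgue measure of a set of admissible values of $b$, so that their difference is controlled by $|a|^{-1}$ times the measure of a symmetric difference of two such sets. Set $R = B^{1/l_2}\max\{|a|,|c|,|z|\}^{-l_1/l_2}$. For $a\ne0$ the defining integral presents $V_\Xc(a,c,z;B)$ directly as $|a|^{-1}\operatorname{meas}(I_1)$ with
\[
  I_1 = \{b\in\Rd : |b|\le R,\ |z^{n+1}+bc|\le|a|R\},
\]
while dilating the region of integration in \eqref{eq:V'} by the factor $B^{1/l_2}$ multiplies its measure by $B^{1/l_2}$ and so presents $V'_\Xc(a,c,z)B^{1/l_2}$ as $|a|^{-1}\operatorname{meas}(I_2)$ with
\[
  I_2 = \{b\in\Rd : |b|\le R,\ |bc|\le|a|R\}.
\]
It then remains to bound $\operatorname{meas}(I_1\triangle I_2)$ under each of the three hypotheses. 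I would first record the symmetries $V_\Xc(a,c,z;B) = V_\Xc(c,a,z;B)$ (already noted before the lemma) and $V'_\Xc(a,c,z) = V'_\Xc(c,a,z)$ (which follows from the same change of variables, or from the closed form $V'_\Xc(a,c,z) = 2\max\{|a|,|c|,|z|\}^{-l_1/l_2}/\max\{|a|,|c|\}$); together these exchange the roles of $a$ and $c$ and swap the second and third error terms, so it suffices to prove the first two bounds, and in particular I may assume $a\ne0$.

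For the bound $O(|z|^{n+1}/|ac|)$ in the case $ac\ne0$, the conditions $|z^{n+1}+bc|\le|a|R$ and $|bc|\le|a|R$ cut out intervals in $b$ of the common length $2|a|R/|c|$ whose centres, $-z^{n+1}/c$ and $0$, lie at distance $|z|^{n+1}/|c|$; since the symmetric difference of two intervals of equal length with centres at distance $\delta$ has measure at most $2\delta$, and intersecting both with the shared constraint $|b|\le R$ cannot enlarge it, we get $\operatorname{meas}(I_1\triangle I_2)\le 2|z|^{n+1}/|c|$. For the bound $O(|z|^{n+1}/(|a|^2-|ac|))$ in the case $|c|<|a|$, the constraint $|bc|\le|a|R$ holds automatically on $\{|b|\le R\}$ (trivially if $c=0$, and because $|a|R/|c|\ge R$ if $c\ne0$), so $I_2=[-R,R]\supseteq I_1$ and $I_1\triangle I_2 = \{b\in[-R,R] : |z^{n+1}+bc|>|a|R\}$; for any $b$ in this set,
\[
  |z|^{n+1}\ge |z^{n+1}+bc| - |bc| > |a|R - |c|R = (|a|-|c|)R,
\]
so the set is empty unless $R<|z|^{n+1}/(|a|-|c|)$, whence $\operatorname{meas}(I_1\triangle I_2)$ is either $0$ or at most $2R$ and in any case strictly below $2|z|^{n+1}/(|a|-|c|)$. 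Dividing by $|a|$ gives the first two claimed bounds; the third follows by the $a\leftrightarrow c$ symmetry, and since each was established under its own hypothesis alone, any applicable one may be used when several hold at once.

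I do not anticipate a genuine obstacle: the argument is elementary one-variable measure theory. The only points needing care are the degenerate configurations — $c=0$ or $a=0$, absorbed by the symmetry reductions together with the conventions $V_\Xc(0,c,z;B)=V_\Xc(c,0,z;B)$ and $V'_\Xc(0,c,z)=V'_\Xc(c,0,z)$, and $z=0$, where $I_1=I_2$ and the error is genuinely $0$ — together with the (standard) fact that restricting the two defining inequalities to $\{|b|\le R\}$ does not increase the measure of their symmetric difference.
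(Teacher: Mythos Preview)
Your proposal is correct and follows essentially the same route as the paper: both set $R$ (the paper's $Z$) equal to $B^{1/l_2}\max\{|a|,|c|,|z|\}^{-l_1/l_2}$, reduce to $a\ne0$ by symmetry, and bound the difference by comparing the two $b$-intervals, handling the case $ac\ne0$ via the shift of centres by $|z|^{n+1}/|c|$ and the case $|c|<|a|$ by observing that $I_2=[-R,R]$ and that the set $I_2\setminus I_1$ is empty unless $R<|z|^{n+1}/(|a|-|c|)$. Your packaging via the symmetric-difference identity $(J_1\cap K)\triangle(J_2\cap K)=(J_1\triangle J_2)\cap K$ is slightly cleaner than the paper's explicit endpoint bookkeeping, but the content is the same.
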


\begin{proof}
  By symmetry, we may assume $a \ne 0$. Setting
  \begin{equation*}
    Z = B^{1/l_2}\max\{|a|,|c|,|z|\}^{-l_1/l_2},
  \end{equation*}
  the first step is to bound the difference
  \begin{equation}\label{eq:diff-V-V'}
    \left|  V_\Xc(a,c,z;B) - \int_{|b|,|a^{-1}bc| \le Z} \frac{\df b}{|a|} \right|.
  \end{equation}
  
  If $c \ne 0$ and $a/c>0$, this means comparing an integral over those $b\in \Rd$ satisfying
  $-Z \le b \le Z$ and
  \begin{equation*}
    -Zac^{-1}-z^{n+1}c^{-1} \le b \le Zac^{-1}-z^{n+1}c^{-1}
  \end{equation*}
  with one over the set of $b$ satisfying $-Z \le b \le Z$ and
  $-Zac^{-1} \le b \le Zac^{-1}$; if $a/c<0$, the comparison is similar. In each of these cases,
  the difference of the two sets is contained in the set of $b$ with
  \begin{equation*}
    Z|ac^{-1}|-|z^{n+1}c^{-1}| \le |b| \le Z|ac^{-1}|+|z^{n+1}c^{-1}|,
  \end{equation*}
  which form two intervals of length $O(|z^{n+1}c^{-1}|)$; multiplying with the integrand $|a|^{-1}$ results in the first error term in~\eqref{eq:compare-V-V'}. 

  To obtain the second bound for the error term, assume $|c|<|a|$. If
  \begin{equation*}
    Z|ac^{-1}|-|z^{n+1}c^{-1}| \ge Z,
  \end{equation*}
  both integration domains in~\eqref{eq:diff-V-V'} are given by $|b| \le Z$, hence the difference is $0$ in this case. Otherwise, $Z < |z^{n+1}|/(|a|-|c|)$, and we end up with an error term
  \begin{equation*}
      \int_{|b|\le Z} \frac{\df b}{|a|} \ll \frac{|z^{n+1}|}{|a^2|-|ac|}
  \end{equation*}
  as stated. The final case $|a|<|c|$ is symmetric.
  
  In each case, the main term arises from the integral on the right of~\eqref{eq:diff-V-V'} by the change of variables $b\mapsto B^{-1/l_2}b$.
\end{proof}

\begin{lemma}\label{lem:sum_V'}
  If $D=D_w$, then
  \begin{equation}\label{eq:sum-1-acz-unextended}
    N_{\Xc,\emptyset}(B) = \frac 1 2 \sums{(a,c,z)\in \Zd^2_\prim \times \Zd \\ |a|,|c|,|z|\le B^{1/(l_1+nl_2)}} V'_\Xc(a,c,z)B^{1/l_2} + O(B^{(n+2)/(l_1+nl_2)}\log B).
  \end{equation}
  If $D=D_w+D_z$, then
  \begin{equation*}
    N_{\Xc,\emptyset}(B) = \sums{(a,c)\in \Zd^2_{\prim} \\ |a|,|c|\le B^{1/(l_1+nl_2)}}
    V'_\Xc(a,c,1)B^{1/l_2} + O(B^{2/(l_1+nl_2)}).
  \end{equation*}
\end{lemma}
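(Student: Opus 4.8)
The plan is to combine the two previous lemmas and then control the remaining error introduced by replacing $V_\Xc$ with $V'_\Xc$. Start from the formulas~\eqref{eq:after-first-summation-1} and~\eqref{eq:after-first-summation-2} in Lemma~\ref{lem:summation-b}, which already express $N_{\Xc,\emptyset}(B)$ as a sum of $V_\Xc(a,c,z;B)$ over primitive vectors in a box of size $B^{1/(l_1+nl_2)}$, up to an error of size $O(B^{3/(l_1+nl_2)})$ (resp.\ $O(B^{2/(l_1+nl_2)})$). Then feed in Lemma~\ref{lem:compare_V_V'}: for each lattice point we replace $V_\Xc(a,c,z;B)$ by $V'_\Xc(a,c,z)B^{1/l_2}$ at the cost of an error of the shape $O(|z|^{n+1}/|ac|)$, $O(|z|^{n+1}/(|a|^2-|ac|))$, or $O(|z|^{n+1}/(|c|^2-|ac|))$, whichever applies. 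Summing these over the box and adding back the error from Lemma~\ref{lem:summation-b} should produce the claimed formulas; the case $D=D_w+D_z$ is the special case $z=1$ (and no sum over $z$), which is why its error term is smaller.

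The core estimate is thus the bound
\[
  \sums{(a,c,z)\in \Zd^2_\prim\times\Zd\\ |a|,|c|,|z|\le T} \min\left\{\frac{|z|^{n+1}}{|ac|},\ \frac{|z|^{n+1}}{|a|^2-|ac|},\ \frac{|z|^{n+1}}{|c|^2-|ac|}\right\} \ll T^{n+2}\log T,
\]
with $T = B^{1/(l_1+nl_2)}$, so that the resulting error is $O(B^{(n+2)/(l_1+nl_2)}\log B)$. First I would split off the sum over $z$, which contributes $\sum_{|z|\le T}|z|^{n+1}\ll T^{n+2}$, reducing matters to showing that the double sum over $(a,c)$ of the minimum of $1/|ac|$, $1/(|a|^2-|ac|)$, $1/(|c|^2-|ac|)$ is $O(\log T)$. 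By the symmetry $a\leftrightarrow c$ and sign symmetries, it is enough to bound $\sum 1/\max\{|ac|,\ |a|^2-|ac|\}$ over $1\le c\le a\le T$ (the diagonal $a=c$ being handled by the $1/|ac|$ term, where $\sum_{a\le T}1/a^2$ converges). For $c\le a/2$ one has $|a|^2-|ac|\gg a^2$, so the term is $\ll 1/a^2$ and summing gives $O(1)$; for $a/2<c\le a$ one has $|ac|\gg a^2$, again giving $\ll 1/a^2$. Wait — that suggests even a bound of $O(1)$ rather than $O(\log T)$; the logarithm instead comes from the degenerate strips where $|a|^2-|ac|$ or $|c|^2-|ac|$ is small, i.e.\ $|a|$ close to $|c|$, which is exactly where the second and third error terms in~\eqref{eq:compare-V-V'} blow up and one must fall back on direct estimation rather than on the stated bound. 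A clean way around this is to note that whenever $||a|-|c||\le 1$ the relevant estimate is instead $V_\Xc(a,c,z;B)\ll$ (something explicit from the definition) plus $V'_\Xc(a,c,z)B^{1/l_2}$, and summing the genuinely near-diagonal contribution $\sum_{|a|\le T}\sum_{||a|-|c||\le 1}$ of the explicit bound produces the single $\log T$.

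The main obstacle, then, is bookkeeping the near-diagonal locus $|a|\approx|c|$ carefully: on that sparse set the pointwise bounds from Lemma~\ref{lem:compare_V_V'} are too weak, and one instead needs the trivial bound $V_\Xc(a,c,z;B)\ll B^{1/l_2}\max\{|a|,|c|,|z|\}^{-l_1/l_2}\cdot(\text{length of }b\text{-interval})$, which after summing over $z$ and over the $O(T)$ near-diagonal pairs contributes the stated $O(B^{(n+2)/(l_1+nl_2)}\log B)$ (the exponent $n+2$ arising as $1/l_2 + (n+1)\cdot\tfrac{1}{l_1+nl_2}$ after clearing, matching the claim). Everything else — extending back to primitive vectors, collecting the lower-order errors from Lemma~\ref{lem:summation-b}, and the specialization to $z=1$ for the second case — is routine and subsumed in the error terms already present.
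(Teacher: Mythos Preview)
Your overall strategy is exactly the paper's: combine Lemma~\ref{lem:summation-b} with Lemma~\ref{lem:compare_V_V'} and sum the resulting error over the box. The slip is where you write ``summing gives $O(1)$'': you have bounded each summand by $\ll 1/a^{2}$, but this is still a \emph{double} sum. For fixed $a$, summing over the $\asymp a$ values of $c$ with $1\le c\le a$ contributes $\asymp 1/a$, and then $\sum_{a\le T}1/a\asymp\log T$. So your own split already produces the correct $O(\log T)$, and the subsequent detour through ``degenerate strips'' is unnecessary. The second and third bounds in~\eqref{eq:compare-V-V'} do blow up as $|a|\to|c|$, but the \emph{first} bound $|z|^{n+1}/|ac|$ is valid whenever $ac\ne 0$ and is $\asymp|z|^{n+1}/a^{2}$ throughout the range $|a|/2<|c|\le|a|$; no separate ``direct estimation'' is needed there. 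The paper carries out precisely your split (after reducing to $|c|\le|a|$ by symmetry):
\[
  \sums{|a|,|z|\le T\\|a|\ge 1}\left(\sum_{|c|\le|a|/2}\frac{|z|^{n+1}}{|a|^{2}-|ac|}+\sum_{|a|/2<|c|\le|a|}\frac{|z|^{n+1}}{|ac|}\right)\ll\sums{|a|,|z|\le T\\|a|\ge 1}\frac{|z|^{n+1}}{|a|}\ll T^{n+2}\log T.
\]
For $D=D_w+D_z$ one sets $z=1$ and drops the $z$-sum; the error becomes $O(\log B)$, which is absorbed by the $O(B^{2/(l_1+nl_2)})$ already present from Lemma~\ref{lem:summation-b}.
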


\begin{proof}
  To arrive at~\eqref{eq:sum-1-acz-unextended}, we may assume $|c| \le |a|$ by the symmetry of all expressions involved. Summing the error term~\eqref{eq:diff-V-V'} over $a$, $c$, and $z$ results in
  \begin{align*}
  &\sums{|a|,|z|\le B^{1/(l_1+nl_2)}\\|a|\ge 1} \left(\sum_{|c|\le |a/2|}\frac{|z|^{n+1}}{|a|^2-|ac|}+\sum_{|a/2| < |c| \le |a|} \frac{|z|^{n+1}}{|ac|} \right)\\
  &\quad \ll \sums{|a|,|z|\le B^{1/(l_1+nl_2)}\\|a|\ge 1} \frac{|z^{n+1}|}{|a|}
  \ll B^{(n+2)/(l_1+nl_2)} \log B.
  \end{align*}
  Setting $z=1$ and only summing over $a$ and $c$ to treat the second case results in the error term $O(\log B)$ that is dominated by the error term in Lemma~\ref{lem:summation-b}.
\end{proof}

\subsection{Moving adjoint divisors: extending the sum}
For now, we investigate those polarizations $L$ for which the adjoint divisor $E_\Xc$ computed in equations~(\ref{eq:adjoint-1},~\ref{eq:adjoint-2}) is a positive multiple of $(1,0)$, that is, moving.

For $t = (a:c:z)\in \Pd^2(\Qd)$ with coprime integral coordinates, let
\begin{equation*}
    c'(\Xc_t) = \begin{cases}
        V_\Xc'(a, c, z) & \text{if $D=D_w$ and $\gcd(a,c)=1$,} \\
        V_\Xc'(a, c, 1) & \text{if $D=D_w+D_z$ and $\gcd(a,c)=z=1$,} \\
        0 & \text{otherwise.}
    \end{cases}
\end{equation*}
These constants can further be evaluated to
\begin{equation}\label{eq:constants-moving-explicit}
  \frac{1}{\max\{|a|,|c|,|z|\}^{l_1/l_2}\max\{|a|,|c|\}}
\end{equation}
whenever they are nonzero by integrating~\eqref{eq:V'}.
In light of Lemmas~\ref{lem:summation-b} and~\ref{lem:compare_V_V'}, the number of points on each fiber $t\in \Pd^2(\Qd)$ is thus
\begin{equation}\label{eq:points_on_fiber}
  N_{\Xc_t,\emptyset} = c'(\Xc_t)B^{1/l_2} + O_t(1).
\end{equation}

\begin{prop}\label{prop:count-moving}
  If the adjoint divisor is a positive multiple of $(1,0)$, then
    \begin{equation}\label{eq:sum-extended}
        N_{\Xc,\emptyset}(B) = \sum_{t\in \Pd^2(\Qd)} c'(\Xc_t) B^{1/l_2} + O(B^{1/l_2-\delta}(\log B)^2)
    \end{equation}
    with
    \begin{equation*}
      \delta = \begin{cases}
        (l_1/l_2 - 2)/(l_1+nl_2) & \text{if $D=D_w$ and} \\
        (l_1/l_2 - 1)/(l_1+nl_2) & \text{if $D=D_w+D_z$.}
      \end{cases}
    \end{equation*}
\end{prop}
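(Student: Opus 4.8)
The plan is to pass from the truncated main term of Lemma~\ref{lem:sum_V'} to the full sum $\sum_{t\in\Pd^2(\Qd)}c'(\Xc_t)$, estimate the tail this introduces, and verify that the exponent it produces is exactly $1/l_2-\delta$. Set $T=B^{1/(l_1+nl_2)}$; from~\eqref{eq:constants-moving-explicit} (obtained by evaluating the integral~\eqref{eq:V'}) one has, for every real $(a,c,z)$ with $(a,c)\neq(0,0)$, a bound $V'_\Xc(a,c,z)\ll\max\{|a|,|c|,|z|\}^{-l_1/l_2}\max\{|a|,|c|\}^{-1}$ that is an equality up to an absolute constant.

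The first step is an identification. For $D=D_w$, the point $t=(a:c:z)\in\Pd^2(\Qd)$ has $c'(\Xc_t)\neq 0$ exactly when $\gcd(a,c)=1$ for its primitive representative, in which case $c'(\Xc_t)=V'_\Xc(a,c,z)$ depends only on $|a|,|c|,|z|$; since each such $t$ has precisely two primitive representatives $\pm(a,c,z)$ and $\gcd(a,c)=1$ already forces primitivity, we obtain $\sum_{t\in\Pd^2(\Qd)}c'(\Xc_t)=\tfrac12\sums{(a,c)\in\Zd^2_\prim\\ z\in\Zd}V'_\Xc(a,c,z)$. For $D=D_w+D_z$ the relevant points are normalized by $z=1$ and there is no factor $\tfrac12$: $\sum_{t\in\Pd^2(\Qd)}c'(\Xc_t)=\sum_{(a,c)\in\Zd^2_\prim}V'_\Xc(a,c,1)$. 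Both series converge by the displayed bound: with $m=\max\{|a|,|c|\}$ there are $O(m)$ pairs in $\Zd^2_\prim$ with this maximum, and for $D=D_w$ summing first over $z\in\Zd$ contributes $O(m^{1-l_1/l_2})$ because $l_1/l_2>2$, so the series is $\ll\sum_m m^{1-l_1/l_2}<\infty$, while for $D=D_w+D_z$ it is $\ll\sum_m m\cdot m^{-l_1/l_2-1}=\sum_m m^{-l_1/l_2}<\infty$ since $l_1/l_2>1$.

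The second, main step is the tail bound. Let $\Sigma_{>T}$ be the sum of $V'_\Xc(a,c,z)$ over $(a,c,z)\in\Zd^2_\prim\times\Zd$ with $\max\{|a|,|c|,|z|\}>T$ for $D=D_w$, respectively of $V'_\Xc(a,c,1)$ over $(a,c)\in\Zd^2_\prim$ with $\max\{|a|,|c|\}>T$ for $D=D_w+D_z$. For $D=D_w$ I would split by which coordinate dominates: when $m:=\max\{|a|,|c|\}>T$, summing $z$ over all of $\Zd$ costs $O(m^{-l_1/l_2})$ per pair, giving $\ll\sum_{m>T}m^{1-l_1/l_2}\ll T^{2-l_1/l_2}$; when instead $m\le T<|z|$, one gets $\ll\bigl(\sum_{m\le T}1\bigr)\bigl(\sum_{|z|>T}|z|^{-l_1/l_2}\bigr)\ll T\cdot T^{1-l_1/l_2}=T^{2-l_1/l_2}$; hence $\Sigma_{>T}\ll T^{2-l_1/l_2}$. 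For $D=D_w+D_z$ only the first regime occurs and $\Sigma_{>T}\ll\sum_{m>T}m^{-l_1/l_2}\ll T^{1-l_1/l_2}$.

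Finally I would combine. For $D=D_w$, Lemma~\ref{lem:sum_V'} and the identification give $N_{\Xc,\emptyset}(B)=B^{1/l_2}\sum_{t\in\Pd^2(\Qd)}c'(\Xc_t)-\tfrac12 B^{1/l_2}\Sigma_{>T}+O(B^{(n+2)/(l_1+nl_2)}\log B)$; since $\tfrac1{l_2}+\tfrac{2-l_1/l_2}{l_1+nl_2}=\tfrac{n+2}{l_1+nl_2}=\tfrac1{l_2}-\delta$, the middle term is $O(B^{1/l_2-\delta})$ and the total error is $O(B^{1/l_2-\delta}\log B)$, a fortiori $O(B^{1/l_2-\delta}(\log B)^2)$. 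For $D=D_w+D_z$ the same manipulation gives $B^{1/l_2}\Sigma_{>T}\ll B^{(n+1)/(l_1+nl_2)}=B^{1/l_2-\delta}$, and the error term $O(B^{2/(l_1+nl_2)})$ of Lemma~\ref{lem:sum_V'} is dominated by this since $n\ge 2$. The main obstacle is the tail estimate $\Sigma_{>T}\ll T^{2-l_1/l_2}$ (resp.\ $T^{1-l_1/l_2}$): each piece is an elementary sum of geometric type, but one must split the region $\max\{|a|,|c|,|z|\}>T$ by the dominant coordinate --- the regimes a priori producing different bounds --- and check that the worst of them reproduces the exponent $1/l_2-\delta$ on the nose; the same estimates also secure the convergence of $\sum_t c'(\Xc_t)$, without which the assertion would be vacuous.
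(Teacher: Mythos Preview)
Your argument is correct and follows essentially the same route as the paper: starting from Lemma~\ref{lem:sum_V'}, you identify the truncated sum with the partial sum of $\sum_{t}c'(\Xc_t)$ via the two-to-one (resp.\ one-to-one) correspondence between primitive vectors and points of $\Pd^2(\Qd)$, and then bound the tail using the explicit evaluation~\eqref{eq:constants-moving-explicit}. The only cosmetic difference is that the paper exploits the symmetry in $a,c$ and then in $c,z$ to reduce to a single sum, whereas you split the tail according to whether $\max\{|a|,|c|\}$ or $|z|$ exceeds $T$; both organizations yield the same bound $\Sigma_{>T}\ll T^{2-l_1/l_2}$ (resp.\ $T^{1-l_1/l_2}$) and the same final exponent $1/l_2-\delta$.
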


\begin{proof}
  For $D=D_w$, the error arising from removing the height condition
  \begin{equation*}
    |a|,|c|,|z|\le B^{1/(l_1+nl_2)}
  \end{equation*}
  in~\eqref{eq:sum-1-acz-unextended} is at most
  \begin{align*}
    \sums{(a,c)\in \Zd^2_\prim,\,z\in \Zd \\ \max\{|a|,|c|,|z|\}> B^{1/(l_1+nl_2)}} V'_\Xc(a,c,z)B^{1/l_2}
    &\ll \sums{a,c,z\ge 0 \\ a\le c,\ c\ne 0 \\ \max\{c,z\}> B^{1/(l_1+nl_2)}} \frac{B^{1/l_2}}{\max\{c,z\}^{l_1/l_2}c} \\
    &\ll \sums{c,z\ge 0,\\ \max\{c,z\}> B^{1/(l_1+nl_2)}} \frac{B^{1/l_2}}{\max\{c,z\}^{l_1/l_2}},
  \end{align*}
  using~\eqref{eq:constants-moving-explicit} and its symmetry in $a$ and $c$.
  Using the symmetry of this expression in $c$ and $z$, this is at most a constant multiple of
  \begin{equation*}
    \begin{aligned}
    \sums{0\le c\le z,\\ z> B^{1/(l_1+nl_2)}}\frac{B^{1/l_2}}{z^{l_1/l_2}}
    &\ll \sum_{z> B^{1/(l_1+nl_2)}} \frac{B^{1/l_2}}{z^{l_1/l_2-1}}
    \ll B^{1/l_2} B^{(2-l_1/l_2)/(l_1+nl_2)}
    \end{aligned}
  \end{equation*}
  as $l_1/l_2>2$. In particular, the sum in equation~\eqref{eq:sum-extended} converges.
  
  Turning to $D=D_w+D_z$, an analogous argument bounds the error term by a constant multiple of 
  \begin{equation*}
    \sum_{c>B^{1/(l_1+nl_2)}}\frac{B^{1/l_2}}{c^{l_1/l_2}} \ll B^{1/l_2} B^{(1-l_1/l_2)/(l_1+nl_2)} 
  \end{equation*}
  as $l_1/l_2>1$.

  Finally, note that for $D=D_w$, each point $(a:c:z)\in \Pd^2(\Qd)$ corresponds to two vectors $(a,c,z)\in \Zd^2_\prim\times \Zd$, resulting in an additional factor of $2$, while for $D=D_w+D_z$, each of the relevant points $(a:c:1)\in \Pd^2(\Qd)$ corresponds to precisely one vector $(a,c)\in \Zd^2_\prim$.
\end{proof}

\subsection{Rigid adjoint divisors: the remaining summations}

Now, let $l_1 < 2l_2$ for $D=D_w$ and $l_1 \le l_2$ for $D=D_w+D_z$; in particular, $E_\Xc$ is rigid. In the latter case, this includes $E_\Xc=0$, in which $L$ is a multiple of the log anticanonical bundle. In the former case, the log anticanonical bundle $L=(n+2,2)$ with $l_1=2$ and $l_2=1$ has already been treated by Chambert-Loir and Tschinkel~\cite{CLT12} (see Remark~\ref{rmk:equivariant}); statements for its multiples with arbitrary $l_1 =2l_2$ can easily be deduced from their theorem.

For both boundaries, the first step in the remaining estimates is to remove the coprimality condition in~\eqref{eq:after-first-summation-1} and \eqref{eq:after-first-summation-2} by means of a Möbius inversion: for fixed $z\ne 0$, the sum over coprime $a$ and $c$ in either equation is
\begin{equation*}
  \sums{|a|,|c|\le B^{1/(l_1+nl_2)}\\(a,c)\ne (0,0)} \sum_{k\mid (a,c)} \mu(k) V_\Xc(a,c,z;B),
\end{equation*}
which can be rewritten as
\begin{equation}\label{eq:moebius}
  \sum_{k \ge 1} \mu(k) \sums{|a|,|c|\le B^{1/(l_1+nl_2)}/k\\(a,c)\ne (0,0)}V_\Xc(ka,kc,z;B).
\end{equation}

\begin{prop}\label{prop:rigid_integrals}
    Let
    \begin{align*}
        W_\Xc(B) &= \begin{cases}
          \displaystyle{\frac 1 2 \ints{H_L(a,b,c,(bc+z^{n+1})/a,z,1) \le B} \frac{\df a\,\df b\,\df c\,\df z}{|a|}} & \text{if $D=D_w$,}\\[1em]
          \displaystyle{\int_{H_L(a,b,c,bc/a,0,1) \le B} \frac{\df a\,\df b\,\df c}{|a|}} & \text{if $D=D_w+D_z$, $l_1<l_2$,} \\[1em]
          \displaystyle{\ints{H_L(a,b,c,bc/a,0,1) \le B\\\max\{|a|,|c|\} \ge 1} \frac{\df a\,\df b\,\df c}{|a|}} & \text{if $D=D_w+D_z$, $l_1=l_2$.}
        \end{cases}
    \end{align*}
    Then
    \begin{align*}
        N_{\Xc,\emptyset}(B) &= \frac{1}{\zeta(2)}W_\Xc(B) + 
        \begin{cases}
            O\!\left(B^{a(\Xc)-\delta(\Xc)}(\log B)^3\right), &\text{if $D=D_w$,}\\
            O\!\left(B^{1/l_2}\right), &\text{if $D=D_w+D_z$,}
        \end{cases}
    \end{align*}
    with the saving
    \begin{equation}\label{eq:delta-nL}
      \delta(\Xc) = \frac{\min\{l_1,2l_2-l_1\}}{l_2(l_1+nl_2)}
    \end{equation}
    in the former case.
\end{prop}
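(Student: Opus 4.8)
The goal is to pass from the arithmetic sum over fixed $(a,c,z)$ (or $(a,c)$) of the one-dimensional quantities $V_\Xc(a,c,z;B)$ — already available from Lemma~\ref{lem:summation-b} — to a genuine volume integral $W_\Xc(B)$, and to track the error. The plan is to start from~\eqref{eq:after-first-summation-1} and~\eqref{eq:after-first-summation-2}, insert the Möbius inversion~\eqref{eq:moebius}, and then for each fixed $k$ and $z$ compare the inner sum over $(a,c)$ with the corresponding two-dimensional integral over $(a,c)\in\Rd^2$. The natural tool is the standard lattice-point-counting-versus-volume comparison: for a reasonably regular region $R$ the number of lattice points differs from $\operatorname{vol}(R)$ by $O(\partial R + 1)$, where $\partial R$ measures the boundary. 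Since $V_\Xc(a,c,z;B)$ is itself a Lipschitz-type function of $(a,c)$ away from the coordinate axes (with a mild blow-up near $a=0$ or $c=0$ handled by the symmetrization $V_\Xc(0,c,z;B)=V_\Xc(c,0,z;B)$ and the explicit bounds in Lemma~\ref{lem:compare_V_V'}), this comparison can be carried out summand by summand.

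First I would fix $z$ (with $z=1$ in the second case) and fix $k$, and write the inner sum $\sum_{(a,c)}V_\Xc(ka,kc,z;B)$ as $k^{-2}\sum$ of values at lattice points of the rescaled lattice $k\Zd^2$; comparing with $\int V_\Xc(a,c,z;B)\,\df a\,\df c$ contributes a main term $\frac{1}{k^2}\int_{\Rd^2}V_\Xc(a,c,z;B)\,\df a\,\df c$ and an error governed by the total variation of $V_\Xc$ in the box $|a|,|c|\le B^{1/(l_1+nl_2)}$. Summing $\mu(k)/k^2$ over $k$ produces the factor $1/\zeta(2)$. Next I would sum over $z$ (in the $D_w$ case) and recognize $\sum_z\int_{\Rd^2}V_\Xc\,\df a\,\df c$ together with the definition of $V_\Xc$ as an iterated integral — the $b$-integral inside $V_\Xc$, the $(a,c)$-integral from the lattice comparison, and the $z$-sum which I would convert to an integral over $z\in\Rd$ at the cost of another bounded error per $(a,c)$ — as exactly the four-dimensional integral $W_\Xc(B)$ after the change of variables $d=(bc+z^{n+1})/a$ that is built into the definition of $V_\Xc$. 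In the case $D=D_w+D_z$ with $z$ frozen at $1$, only the three-dimensional integral appears, and the restriction $\max\{|a|,|c|\}\ge 1$ in the $l_1=l_2$ subcase is precisely what is needed to make the otherwise-divergent integral converge; it matches the lattice condition $(a,c)\ne(0,0)$ combined with $\gcd(a,c)=1$.

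The error bookkeeping is where the stated exponents come from. The boundary/variation error for the $(a,c)$-sum at fixed $k,z$ is, by Lemma~\ref{lem:compare_V_V'} and the explicit size $V'_\Xc(a,c,z)\ll \max\{|a|,|c|,|z|\}^{-l_1/l_2}\max\{|a|,|c|\}^{-1}$, of order $B^{1/l_2}$ times a sum over the boundary of the box of side $B^{1/(l_1+nl_2)}/k$, producing something like $B^{1/l_2}\cdot B^{(1-l_1/l_2)/(l_1+nl_2)}/k$; summing over $k\le B^{1/(l_1+nl_2)}$ gives a $\log B$, and the error terms from Lemma~\ref{lem:compare_V_V'} (the $|z|^{n+1}/|ac|$ pieces) contribute another factor after summing over $a,c,z$. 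Bounding $a(\Xc)=1/l_2$ here (which holds since $l_1\le 2l_2$, resp.\ $l_1\le l_2$) and comparing exponents yields the claimed saving $\delta(\Xc)=\min\{l_1,2l_2-l_1\}/(l_2(l_1+nl_2))$: the two terms in the minimum correspond to the two sources of error, removing the height condition on $(a,c,z)$ (controlled by $l_1$, as in the proof of Lemma~\ref{lem:sum_V'}) versus the lattice-point discrepancy (controlled by how far $l_1$ is from the critical value $2l_2$). Three powers of $\log B$ accumulate: one from the $k$-sum, one from the $z$-sum, one from the truncation of the $z$-range (or from a dyadic decomposition of the box). In the $D=D_w+D_z$ case there is no $z$-sum and no Möbius-times-unbounded-box interaction of the same severity, so the cruder bound $O(B^{1/l_2})$ suffices and no logarithm is needed.

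The main obstacle I anticipate is the lattice-point-versus-volume comparison near the coordinate hyperplanes $a=0$ and $c=0$, where $V_\Xc(a,c,z;B)$ degenerates (the factor $1/|a|$ blows up) and the naive total-variation bound fails. The fix is to exploit the symmetry $V_\Xc(a,c,z;B)=V_\Xc(c,a,z;B)$ to reduce to $|c|\le|a|$, so that $1/|a|$ is bounded by $1/|c|$ only in a thin wedge, and to handle the remaining region $|c|\ll 1$ (and $|a|\ll 1$) by a direct crude estimate — there the number of relevant $(a,c)$ is $O(B^{1/(l_1+nl_2)})$, each contributing $O(B^{1/l_2})$ to $N$, which is absorbed into the claimed error since $1/(l_1+nl_2)$ is strictly less than the saving $\delta(\Xc)$ demands. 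This is exactly the kind of boundary-stratum argument already rehearsed in the proofs of Lemmas~\ref{lem:compare_V_V'} and~\ref{lem:sum_V'}, so it should go through with the case distinctions $|c|<|a|$, $|a|<|c|$, $|a|=|c|$ managed as there.
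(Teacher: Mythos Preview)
Your high-level strategy coincides with the paper's: start from Lemma~\ref{lem:summation-b}, insert the M\"obius inversion~\eqref{eq:moebius}, and replace the remaining sums over $a$, $c$ (and $z$) by integrals. The paper, however, carries out the sum-to-integral comparison \emph{one variable at a time} rather than as a two-dimensional lattice-point count. For fixed $k,c,z$, it observes that $V_\Xc(ka,kc,z;B)$ is piecewise monotone in $a$ with a bounded number of pieces (via \cite[Lem.~3.6]{df14}), so that \cite[Lem.~3.1]{der09} bounds the discrepancy by the supremum of the summand; then the supremum, controlled by~\eqref{eq:V_nl-bound} as $B^{1/l_2}/(\max\{k,|kc|,|z|\}^{l_1/l_2}\max\{k,|kc|\})$, is summed over $c,z,k$. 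The $(\log B)^3$ and the saving $\min\{l_1,2l_2-l_1\}$ arise together from splitting this triple sum according to which of $k$, $|kc|$, $|z|$ realizes the maximum --- not from two separate mechanisms (``removing the height condition'' versus ``lattice discrepancy'') as you suggest. The same device is then repeated for $c$ and for $z$.

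Your two-dimensional proposal has a genuine gap at precisely this step. The bound ``lattice points in $R$ $=\operatorname{vol}(R)+O(\partial R+1)$'' applies to \emph{unweighted} counts; here you are summing the weight $V_\Xc(ka,kc,z;B)$, and the naive replacement --- error $\ll (\text{side of box})\times\sup V_\Xc$ --- gives $B^{1/(l_1+nl_2)}\cdot B^{1/l_2}/\max\{1,|z|\}^{l_1/l_2}$, which after summing over $z$ is of order $B^{(n+2)/(l_1+nl_2)}=B^{a(\Xc)}$, i.e.\ no saving at all. To do better you would need a genuine variation bound on $V_\Xc$, and the $1/\max\{|a|,|c|\}$ factor makes this delicate near the diagonal and the origin; the paper's one-variable monotonicity argument sidesteps this entirely. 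Note also a factual slip: in the rigid regime $l_1<2l_2$ for $D=D_w$ one has $a(\Xc)=(n+2)/(l_1+nl_2)$, strictly larger than $1/l_2$, so your exponent bookkeeping based on $a(\Xc)=1/l_2$ cannot yield the stated $\delta(\Xc)$. Finally, for $D=D_w+D_z$ the paper does two further manipulations you do not mention: it replaces $z=1$ by $z=0$ in the height (harmless since $\max\{|a|,|c|\}\ge 1$), and it relaxes $\max\{|a|,|c|\}\ge k$ to $\max\{|a|,|c|\}\ge 1$ (or removes it when $l_1<l_2$), each at a cost absorbed into $O(B^{1/l_2})$.
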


\begin{proof}
    The error terms in Lemma~\ref{lem:sum_V'} dominate the main term if $D=D_w$, and so our starting point is Lemma~\ref{lem:summation-b} combined with \eqref{eq:moebius}. Observe that~\eqref{eq:constants-moving-explicit} is still valid as an upper bound, though; that is,
    \begin{equation}\label{eq:V_nl-bound}
        V_\Xc(a,c,z;B) \ll 
            \frac{B^{1/l_2}}{\max\{|a|,|c|,|z|\}^{l_1/l_2}\max\{|a|,|c|\}}.
    \end{equation}

    \subsubsection*{Summation over $a$}
    By~\cite[Lem.~3.6]{df14}, $V_\Xc$ treated as a function in $a$ is piecewise monotonous, where the number of pieces is bounded independently of $c$, $z$, and $B$; by~\cite[Lem.~3.1]{der09}, the sum over $a$ can thus be estimated by an integral, introducing an error
  
    \begin{equation}\label{eq:replacing-sums-by-integrals}
      \begin{aligned}
        \left\lvert \sums{|ka|\le B^{1/(l_1+nl_2)}\\ \max\{|a|,|c|\}\ge 1}
        V_\Xc(ka,kc,z;B)
        -
        \ints{|ka|\le B^{1/(l_1+nl_2)}\\ \max\{|a|,|c|\}\ge 1}
        V_\Xc(ka,kc,z;B)\df a\right\rvert \\[.5em]
         \ll \sup_{\substack{|ka|\le B^{1/(l_1+nl_2)} \\ \max\{|a|,|c|\}\ge 1 }} V_\Xc(ka,kc,z;B).
      \end{aligned}
    \end{equation}
    By the linear change of variables $a\mapsto a/k$, the integral in~\eqref{eq:replacing-sums-by-integrals} can be rewritten as $k^{-1}V_\Xc^{(1)}(k,kc,z;B)$ with
    \begin{equation*}
      V_\Xc^{(1)}(k,c,z;B) = \ints{|a|\le B^{1/(l_1+nl_2)} 
      \\ \max\{|a/k|,|c/k|\}\ge 1} V_\Xc(a,c,z;B) \df a
    \end{equation*}
    (which vanishes for $k > B^{1/(l_1+nl_2)}$). Using \eqref{eq:V_nl-bound}, the second line of~\eqref{eq:replacing-sums-by-integrals} is at most
    \begin{equation*}
      \frac{B^{1/l_2}}{\max\{k,|kc|,|z|\}^{l_1/l_2} \max\{k,|kc|\}}
    \end{equation*}
    since the supremum ranges only over $|a|\ge 1$ for $c=0$, while the terms $k$ do not contribute to the maximum for $c \ne 0$. If $D=D_w$, its sum over $c$, $z$ and $k$ can be bounded by a constant multiple of
    \begin{equation}\label{eq:error-N1}
      B^{\frac{n+2}{l_1+nl_2} - \frac{\min\{l_1,2l_2-l_1\}}{l_2(l_1+nl_2)}}(\log B)^3,
    \end{equation}
    by cutting it into three regions to simplify the maximum in the denominator; this bound can be rewritten as $B^{a(\Xc)-\delta(\Xc)}(\log B)^3$ by~\eqref{eq:a-invariant-Dw} and~\eqref{eq:delta-nL}.

    If $D=D_w+D_z$ (hence $z=1$), the error term is bounded by
    \begin{equation*}
      \sum_{k, c\in \Zd,\ k\ge 1} \frac{B^{1/l_2}}{k^{l_1/l_2+1} \max\{1,|c|\}^{l_1/l_2+1}} \ll B^{1/l_2},
    \end{equation*}
    whence
    \begin{equation*}
      N_{\Xc,\emptyset}(B) = \sums{k, c \in\Zd,\ k\ge 1 \\ |kc| \le B^{1/(l_1+nl_2)}} \frac{\mu(k)}{k} V_\Xc^{(1)}(k,kc,1;B) + O(B^{1/l_2}).
    \end{equation*}
    For $D=D_w$, an analogous expression involving an additional sum over $|z| \le B^{1/(l_1+nl_2)}$, a factor $1/2$, and the error term~\eqref{eq:error-N1} holds.

    \subsubsection*{Summation over $c$}
    Arguing analogously to before and setting
    \begin{equation*}
      V_\Xc^{(2)}(k,z;B) = \ints{|c|\le B^{1/(l_1+nl_2)}} V_\Xc^{(1)}(k,c,z;B) \df c,
    \end{equation*}
    the difference 
    \begin{equation*}
      \left| \frac{1}{k}\sum_{|c|\le B^{1/(l_1+nl_2)}/k}
      V_\Xc^{(1)}(k,kc,z;B) - \frac{1}{k^2}V_\Xc^{(2)}(k,z;B)
      \right|
    \end{equation*}
    is at most $\sup_{|c|\le B^{1/(l_1+nl_2)}/k}(V_\Xc^{(1)}(k,kc,z;B))/k$.
    Treating the case $D=D_w$ first, the bound~\eqref{eq:V_nl-bound} results in
    \begin{equation*}
      \begin{aligned}
      \frac{1}{k}V_\Xc^{(1)}(k,c,z;B)
      &\ll \int_{|a|\le B^{1/(l_1+nl_2)}} \frac{B^{1/l_2}}{k\max\{1,|z|\}^{l_1/l_2}\max\{|a|,1\}} \df a \\
      &\ll \frac{B^{1/l_2}\log B}{k\max\{1,|z|\}^{l_1/l_2}}.
      \end{aligned}
    \end{equation*}
    Summing this error over $k\le B^{1/(l_1+nl_2)}$ and $z$ results in a total error of at most
    \begin{equation*}
      \max\{B^{(n+1)/(l_1+nl_2)},B^{1/l_2}\}(\log B)^3 \ll B^{a(\Xc)-\delta(\Xc)}(\log B)^3,
    \end{equation*}
    yielding
    \begin{equation*}
      N_{\Xc,\emptyset}(B) = \sums{1 \le k\le B^{1/(l_1+nl_2)}\\ |z|\le B^{1/(l_1+nl_2)}} \frac{\mu(k)}{k^2} V_\Xc^{(2)}(k,z;B) + O\!\left(B^{a(\Xc)-\delta(\Xc)}(\log B)^3\right).
    \end{equation*}
    For the case $D=D_w+D_z$ (with $z=1$), the estimate~\eqref{eq:V_nl-bound} for $V_\Xc$ together with $\max\{|a|,|c|\} \ge k$ implies
    \begin{equation*}
      \frac{1}{k}V_\Xc^{(1)}(k,c,1;B) \ll \int_{\Rd} \frac{B^{1/l_2}}{k\max\{|a|,|k|\}^{l_1/l_2 +1}} \df a \ll  \frac{B^{1/l_2}}{k^{l_1/l_2+1}}.
    \end{equation*}
    Summing this error over $k \ge 1$ now results in a total error of order of magnitude $B^{1/l_2}$. In other words,
    \begin{equation}\label{eq:eq:N_2-moving-prelim}
      N_{\Xc,\emptyset}(B) = \sums{k\ge 1} \frac{\mu(k)}{k^2} V_\Xc^{(2)}(k,1;B) + O(B^{1/l_2}).
    \end{equation}

    \subsubsection*{Summation over $z$}
    
    This step is only needed for $D=D_w$. Using the symmetry of the volumes in $a$ and $c$, the new volume is at most
    \begin{equation}\label{eq:bound_V2}
        \begin{aligned}
            V_\Xc^{(2)}(k,z;B) &\ll \ints{1\le |c|\le B^{1/(l_1+nl_2)}\\ |a|\le |c|} \frac{B^{1/l_2}}{|c|^{l_1/l_2+1}} \df a\, \df c
            \ll \int_{1\le |c|\le B^{1/(l_1+nl_2)}} \frac{B^{1/l_2}}{|c|^{l_1/l_2}} \df c \\
            &\ll \max\left\{B^{\frac{1}{l_1} + \frac{1-l_1/l_2}{l_1+nl_2}},B^{1/l_2}\right\} \log B
            \ll B^{a(\Xc)-\delta(\Xc)} \log B,
        \end{aligned}
    \end{equation}
    where the factor $\log B$ is only necessary if $l_1=l_2$. Defining
    \begin{equation*}
        V_\Xc^{(3)}(k;B) =
      \int_{|z|\le B^{1/(l_1+nl_2)}} V_\Xc^{(2)}(k,z;B)\df z,
    \end{equation*}
    it follows from~\eqref{eq:bound_V2} that 
    \begin{equation*}
      \left| \frac{1}{k^2}\sum_{|z|\le B^{1/(l_1+nl_2)}} V_\Xc^{(2)}(k,z;B) - \frac{1}{k^2}V_\Xc^{(3)}(k;B)\right|
      \ll \frac{B^{a(\Xc)-\delta(\Xc)} \log B}{k^2},
    \end{equation*}
    and summing $1/k^2$ over $k\ge 1$ contributes only a bounded factor.
    In total,
    \begin{equation*}
        N_{\Xc,\emptyset}(B) = \sum_{k \ge 1}\frac{\mu(k)}{k^2}V_\Xc^{(3)}(k;B) + O\!\left(B^{a(\Xc)-\delta(\Xc)} (\log B)^3\right).
    \end{equation*}

    \subsubsection*{Integrating near the origin}
    The difference between $V_\Xc^{(3)}(k;B)$ and $W_\Xc(B)$ for $D=D_w$ lies in the condition $\max\{|a|,|c|\}\ge k$; removing it introduces the error
    \begin{equation*}
      \ints{|a|,|c|\le k\\|z|\le B^{1/(l_1+nl_2)}}V_\Xc(a,c,z;B).
    \end{equation*}
    Appealing to~\eqref{eq:V_nl-bound} and using its symmetry in $a$ and $c$, this error is bounded by a constant multiple of
    \begin{equation*}
      2\!\ints{|a|\le|c|\le k\\|z|\le B^{1/(l_1+nl_2)}}\!
      \frac{B^{1/l_2}}{\max\{|c|,|z|\}^{l_1/l_2}|c|} \df a \, \df c \, \df z
      \!\ll\!
      \ints{|c|\le k\\|z|\le B^{1/(l_1+nl_2)}}
      \!\frac{B^{1/l_2}}{\max\{|c|,|z|\}^{l_1/l_2}} \df c \, \df z.
    \end{equation*}
    Dividing the integral into ranges with $|z|\le |c|$ and $|c|<|z|$, the one over the former is easily seen to be at most
    \begin{equation}\label{eq:error-remove-max-z<c}
      \int_{|z|\le|c|\le k} \frac{B^{1/l_2}}{|c|^{l_1/l_2}}  \df c \, \df z\ll B^{1/l_2}k^{2-l_1/l_2};
    \end{equation}
    for the latter integral, we first estimate
    \begin{equation*}
      \int_{|c|\le |z|\le  B^{1/(l_1+nl_2)}} \frac{B^{1/l_2}}{|z|^{l_1/l_2}} \df z
      \ll 
      \begin{cases}
        B^{(n+1)/(l_1+nl_2)}, & \text{if $l_1<l_2$,} \\
        B^{1/l_2}(\log B+|\log |c||), & \text{if $l_1=l_2$, and}\\
        B^{1/l_2}|c|^{1-l_1/l_2}, & \text{if $l_2<l_1<2l_2$,}
      \end{cases}
    \end{equation*}
    and thus
    \begin{equation*}
      \ints{0\le |c|\le k\\|c|\le |z|\le  B^{1/(l_1+nl_2)}}   \frac{B^{1/l_2}}{|z|^{l_1/l_2}} \df z\,\df c
      \ll 
      \begin{cases}
        kB^{(n+1)/(l_1+nl_2)}, & \text{if $l_1<l_2$,} \\
        kB^{1/l_2}(\log B+\log k), & \text{if $l_1=l_2$, and}\\
        B^{1/l_2}k^{2-l_1/l_2}, & \text{if $l_2<l_1<2l_2$.}
      \end{cases}
    \end{equation*}
    In any case, both these errors and~\eqref{eq:error-remove-max-z<c} can be summed over $k\le B^{1/(l_1+nl_2)}$, so that
    \begin{equation}\label{eq:W1_bounded_k}
      \left|N_{\Xc,\emptyset}(B)  - \sum_{1\le k \le B^{1/(l_1+nl_2)}} \frac{\mu(k)}{k^2}W_\Xc(B)\right|
      \ll B^{a(\Xc)-\delta(\Xc)} (\log B)^3.
    \end{equation}

    Using the symmetry of \ref{eq:V_nl-bound} in $a,c$, one can bound
    \begin{equation*}
        W_\Xc(B) \ll B^{a(\Xc)}.
    \end{equation*}
    Therefore, extending the sum over $k$ in \ref{eq:W1_bounded_k}
    to all $k\ge 1$ introduces an error of at most
    \begin{align*}
      \sum_{k> B^{1/(l_1+nl_2)}} \frac{1}{k^2}W_\Xc(B) \ll B^{a(\Xc)-\delta(\Xc)}.
    \end{align*}
    This completes the proof in the case $D=D_w$.

    For $D=D_w+D_z$, comparing $V_\Xc^{(2)}(k,1;B)$ and $W_\Xc(B)$, the region of the former integral is described by $H_L(a,b,c,(1+bc)/a,1,1)\le B$ and $\max\{|a|,|c|\}\ge k$, while that of the latter is described by $H_L(a,b,c,bc/a,0,1)\le B$ and $\max\{|a|,|c|\}\ge 1$ (resp. no lower bound on this maximum if $l_1 < l_2$). Replacing $(1+bc)/a$ in the former height condition by $bc/a$ as in the latter introduces an error term of size $O(\log B)$ by the same arguments used to obtain the estimate~\eqref{eq:compare-V-V'} and its sum over the remaining variables in Lemma~\ref{lem:sum_V'}. Replacing $z=1$ by $z=0$ preserves the value of the integral as $\max\{|a|,|c|\}\ge k\ge 1$, so that $z$ never contributes to the maxima anyway. Removing the lower bound $\max\{|a|,|c|\}\ge k$ if $l_1<l_2$ introduces an error of at most
    \begin{equation*}
    2 \int_{|a|\le|c|< k} \frac{B^{1/l_2}}{|c|^{l_1/l_2+1}}\df a\,\df c \ll \int_{|c|\le k} \frac{B^{1/l_2}}{|c|^{l_1/l_2}}\df c \ll  B^{1/l_2}k^{1-l_1/l_2},
    \end{equation*}
    whose sum over $k\ge 1$ converges to a function in $O(B^{1/l_2})$ after multiplying it with the factor $k^{-2}$ in~\eqref{eq:eq:N_2-moving-prelim}. If $l_1=l_2$, replacing $\max\{|a|,|c|\}\ge k$ by $\max\{|a|,|c|\}\ge 1$ results in the error
    \begin{equation*}
      2 \ints{1\le|c|\le k\\|a|\le |c|} \frac{B^{1/l_2}}{|c|^{2}}\df a\,\df c \ll \int_{1 \le |c|\le k} \frac{B^{1/l_2}}{|c|}\df c \ll B^{1/l_2} \log k,
    \end{equation*}
    whose sum over $k\ge 1$ converges after multiplication by $k^{-2}$ as well.
\end{proof}

\begin{lemma}
 The volume $W_\Xc(B)$ in Proposition~\ref{prop:rigid_integrals} can be expressed as
    \begin{align*}
        &\frac{B^{a(\Xc)}}{2} \int_{H_L(a,b,c,(bc+z^{n+1})/a,z,1) \le 1} \frac{\df a\,\df b\,\df c\,\df z}{|a|}
        && \text{if $D=D_w$,}\\[.5em]
        &B^{a(\Xc)} \int_{H_L(a,b,c,bc/a,0,1) \le 1} \frac{\df a\,\df b\,\df c}{|a|}
        &&\text{if $D=D_w+D_z$, $l_1<l_2$, and}\\[.5em]
        &\frac{2B^{a(\Xc)} \log B}{l_1(n+1)} \int_{H_L(1,b,c,bc,0,0) \le 1} \df b\,\df c + O(B^{a(\Xc)})
        && \text{if $D=D_w+D_z$, $l_1=l_2$.}
    \end{align*}
\end{lemma}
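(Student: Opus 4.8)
The plan is to exploit the quasi-homogeneity of the height function~\eqref{eq:height_torsor} and of the measure $\df a\,\df b\,\df c\,(\df z)/|a|$ under the torus action encoded in the grading of the Cox ring. Working on the slice $w=1$ (and $z=0$ when $D=D_w+D_z$), consider for $\lambda>0$ the substitution $(a,c,z,b,d)\mapsto(\lambda a,\lambda c,\lambda z,\lambda^n b,\lambda^n d)$. It preserves the relation $ad-bc=z^{n+1}$ (resp.\ $ad-bc=0$) used to express $d=(bc+z^{n+1})/a$ (resp.\ $d=bc/a$), as both sides scale by $\lambda^{n+1}$. Inspecting~\eqref{eq:height_torsor} with $|w|=1$: since $\max\{|a|,|c|,|z|\}\mapsto\lambda\max\{|a|,|c|,|z|\}$ and $\max\{|b|,|d|\}\mapsto\lambda^n\max\{|b|,|d|\}$, both terms of $H_L$ scale by $\lambda^{l_1+nl_2}$; meanwhile $\df a\,\df b\,\df c\,\df z/|a|$ scales by $\lambda^{n+2}$ and $\df a\,\df b\,\df c/|a|$ by $\lambda^{n+1}$.

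For $D=D_w$, and for $D=D_w+D_z$ with $l_1<l_2$, the domain carries no further constraint, so taking $\lambda=B^{1/(l_1+nl_2)}$ turns the condition $H_L\le B$ into $H_L\le 1$ and pulls the factor $B^{(n+2)/(l_1+nl_2)}$, resp.\ $B^{(n+1)/(l_1+nl_2)}$, out of the measure. By~\eqref{eq:a-invariant-Dw} and~\eqref{eq:a-invariant-DwDz} these exponents equal $a(\Xc)$ in the relevant ranges, giving the first two identities; the remaining constant integrals are finite since $W_\Xc(B)\ll B^{a(\Xc)}$ was shown at the end of the proof of Proposition~\ref{prop:rigid_integrals}.

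For $D=D_w+D_z$ with $l_1=l_2$, the constraint $\max\{|a|,|c|\}\ge1$ is not scaling invariant and is the source of the extra $\log B$. Here I would first simplify the height: from $\max\{|b|,|bc/a|\}=|b|\max\{|a|,|c|\}/|a|$ one gets, writing $l=l_1=l_2$ and $M=\max\{|a|,|c|\}$, that $H_L(a,b,c,bc/a,0,1)=\max\{M^{2l}|b|^l/|a|^l,\,M^{(n+1)l}\}$. For fixed $(a,c)$ the inner $b$-integral vanishes unless $M\le B^{1/((n+1)l)}$ and otherwise equals $2B^{1/l}/M^2$, so
\begin{equation*}
  W_\Xc(B)=2B^{1/l}\int_{1\le\max\{|a|,|c|\}\le B^{1/((n+1)l)}}\frac{\df a\,\df c}{\max\{|a|,|c|\}^2}.
\end{equation*}
The layer-cake identity $\int_{1\le\max\{|a|,|c|\}\le R}\df a\,\df c/\max\{|a|,|c|\}^2=8\log R$ together with the elementary evaluation $\int_{H_L(1,b,c,bc,0,0)\le1}\df b\,\df c=\int_{\Rd}2\,\df c/\max\{1,|c|\}^2=8$ and the identity $a(\Xc)=1/l_2=1/l$ then yield the last formula; this actually produces it with error term $0$, and the stated $O(B^{a(\Xc)})$ merely leaves room for a less precise treatment of $H_L$ near $\max\{|a|,|c|\}=1$.

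The change of variables is routine; the only genuine point is the $l_1=l_2$ case, where one must identify that the logarithm arises solely from integrating $\max\{|a|,|c|\}^{-2}$ over the scaling-invariant shell and carry the constant $8/((n+1)l_1)$ through correctly.
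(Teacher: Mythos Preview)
Your argument is correct. For the first two cases it is exactly the paper's argument: the same homogeneity substitution $(a,c,z,b)\mapsto(\lambda a,\lambda c,\lambda z,\lambda^n b)$ with $\lambda=B^{1/(l_1+nl_2)}$.

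For the case $D=D_w+D_z$, $l_1=l_2$, you take a genuinely different and in fact sharper route. The paper proceeds by two approximations: it first replaces the symmetric constraint $\max\{|a|,|c|\}\ge 1$ by the asymmetric one $|a|\ge 1$, and then drops the piece $|c|^{l_1(n+1)}\le B$ of the height condition, each step costing $O(B^{a(\Xc)})$; only then does it perform the change of variables $(b,c)\mapsto(B^{-1/l_1}ab,a^{-1}c)$ that makes the inner integral independent of $a$ and reduces the $\log B$ to $\int_{1\le|a|\le B^{1/(l_1(n+1))}}|a|^{-1}\df a$. You instead evaluate $H_L$ explicitly via $\max\{|b|,|bc/a|\}=|b|M/|a|$, integrate out $b$ exactly, and recognize the remaining $(a,c)$-integral as $8\log R$ by the layer-cake computation; matching the constant $8$ with the value of $\int_{H_L(1,b,c,bc,0,0)\le 1}\df b\,\df c$ closes the argument. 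Your computation is exact and shows that the $O(B^{a(\Xc)})$ in the statement is an artifact of the paper's method rather than an intrinsic error. One small remark: the bound $W_\Xc(B)\ll B^{a(\Xc)}$ you invoke for finiteness is stated in the paper only for $D=D_w$; for $D=D_w+D_z$ with $l_1<l_2$ it follows the same way from~\eqref{eq:V_nl-bound} together with the height constraint $\max\{|a|,|c|\}\le B^{1/(l_1+nl_2)}$, but you might want to note this explicitly.
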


\begin{proof}
    For $D=D_w$, the change of variables $(a,c,z) \mapsto B^{-1/(l_1+nl_2)}(a,c,z)$ and $b\mapsto B^{-n/(l_1+nl_2)}b$ transforms $W_\Xc(B)$ to the integral on the right. If $D=D_w+D_z$ and $l_1<l_2$, the same change of variables without $z$ achieves this end.
    In the final case, with $D=D_w+D_z$ and $l_1=l_2$, we first replace $\max\{|a|,|c|\} \ge 1$ by $|a| \ge 1$. Using \eqref{eq:constants-moving-explicit}, the difference is
    \begin{equation*}
        \ints{H_L(a,b,c,bc/a,0,1) \le B\\|a|\le 1,\ |c| \ge 1} \frac{\df a\,\df b\,\df c}{|a|} \ll \int_{|a|\le 1,\ |c| \ge 1} \frac{B^{1/l_2}}{|c^2|} \df a\,\df c \ll B^{a(\Xc)}.
    \end{equation*}
    Observe that 
    \begin{equation*}
        H_L(a,b,c,bc/a,0,1) = \max\{H_L(a,b,c,bc/a,0,0),|a|^{l_1(n+1)},|c|^{l_1(n+1)}\};
    \end{equation*}
    removing $|c|^{l_1(n+1)} \le B$ from this height condition once more results in an error of
    \begin{equation*}
        \ints{H_L(a,b,c,bc/a,0,1) \le B\\|a|\ge 1,\ |c|^{l_1(n+1)}>B} \frac{\df a\,\df b\,\df c}{|a|} \ll \ints{1 \le |a| \le B^{1/(l_1(n+1))}\\|c| > B^{1/(l_1(n+1))}} \frac{B^{1/l_2}}{|c^2|} \df a\, \df c\ll B^{a(\Xc)}.
    \end{equation*}
    The resulting main term is
    \begin{equation*}
        \int_{1 \le |a| \le B^{1/(l_1(n+1))}} \frac{1}{|a|} \left(\int_{H_L(a,b,c,bc/a,0,0) \le B} \df b\, \df c\right) \df a;
    \end{equation*}
    via the change of variables $b\mapsto B^{-1/l_1}ab, c\mapsto a^{-1}c$, its inner integral is equal to
    \begin{equation*}
        B^{a(\Xc)} \int_{H_L(1,b,c,bc,0,0) \le 1} \df b\, \df c
    \end{equation*}
    and in particular independent of $a$. Integrating $|a|^{-1}$ over $a$ results in the remaining factor $2 (l_1(n+1))^{-1} \log B$.
\end{proof}

\begin{lemma}\label{lem:W_interpretation}
    The volumes in Proposition~\ref{prop:rigid_integrals} satisfy
    \begin{equation*}
        W_\Xc(B) = \frac{\alpha(\Xc)\omega_{\Xc,\infty}}{a(\Xc)}B^{a(\Xc)}(\log B)^{b(\Xc)-1}
    \end{equation*}
    if $D=D_w$ or $l_1<l_2$, and the same expression holds true up to the addition of an error term $O(B^{a(\Xc)})$ if $D=D_w+D_z$ and $l_1=l_2$.
\end{lemma}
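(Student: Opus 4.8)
The plan is to prove the identity by a direct change of variables, case by case, starting from the three explicit formulas for $W_\Xc(B)$ obtained in the preceding lemma and transforming each torsor-type integral into the integral defining $\omega_{\Xc,\infty}$. The mechanism is always the same: discard the measure-zero locus where one of the coordinates $a$, $b$ vanishes, rescale the remaining Cox coordinates so that $a$ (of degree $(1,0)$) and $b$ (of degree $(0,1)$ modulo the torsor equation) play the role of the residual $\Gdm^2$-parameters, use that $H_L$ is homogeneous of weight $(l_1+nl_2,l_2)$ under the two $\Gdm$-actions encoded in the grading of the Cox ring to pull the dependence on these parameters out of the height, and carry out the resulting elementary radial integral; this realizes the chart $\tilde\psi$ and reassembles $\omega_{\Xc,\infty}$ together with the constant $\alpha(\Xc)/a(\Xc)$.

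In detail, for $D=D_w$ write $W_\Xc(B)=\tfrac12 B^{a(\Xc)}\int_R\frac{\df a\,\df b\,\df c\,\df z}{|a|}$ with $R=\{H_L(a,b,c,(bc+z^{n+1})/a,z,1)\le 1\}$. Away from $\{ab=0\}$, substituting $(b,c,z)=(a^nb_0,ac_0,az_0)$ for each fixed $a\ne 0$ turns the torsor measure into $|a|^{n+1}\,\df a\,\df b_0\,\df c_0\,\df z_0$ and gives $d=a^n(b_0c_0+z_0^{n+1})$; applying the first $\Gdm$ with parameter $1/a$ and the second with parameter $1/b_0$ yields
\[
  H_L(a,a^nb_0,\dots,1)=|a|^{l_1+nl_2}\,|b_0|^{l_2}\,H_L\bigl(1,1,c_0,c_0+z_0^{n+1}w_0,z_0,w_0\bigr),\qquad w_0=1/b_0,
\]
which is exactly the point $\tilde\psi(c_0,z_0,w_0)$. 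Integrating $\int_0^{A}|a|^{n+1}\,\df a$ with $A=(|b_0|^{l_2}H_L)^{-1/(l_1+nl_2)}$ produces the factor $\tfrac1{n+2}=\alpha(\Xc)/a(\Xc)$ (here $\alpha(\Xc)=1/(l_1+nl_2)$ and $a(\Xc)=(n+2)/(l_1+nl_2)$ since $L$ is not a multiple of the log anticanonical bundle), and the substitution $b_0=1/w_0$ converts the leftover weight $|w_0|^{l_2 a(\Xc)-2}$ into $|w_0|^{e_\Xc-1}$, the needed identity $l_2a(\Xc)-1=e_\Xc$ being a one-line computation from~\eqref{eq:a-invariant-Dw}. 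Summing over the four sign sectors of $(a,b_0)$ — the two with $a<0$ contribute the same as $a>0$, and the two with $b_0<0$ supply the half of the $w_0$-axis with $w_0<0$ — reassembles $\omega_{\Xc,\infty}$ in the form~\eqref{eq:omega_infty_Dw}; as $b(\Xc)=1$ the log factor is trivial, and one obtains $W_\Xc(B)=\tfrac{\alpha(\Xc)\omega_{\Xc,\infty}}{a(\Xc)}B^{a(\Xc)}$.

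The case $D=D_w+D_z$, $l_1<l_2$, is identical after deleting the variable $z$ (which equals $0$ throughout, matching the fact that $\tau_{\Xc,\{D_z\},\infty}$ is supported on $D_z=\{z_0=0\}$): the substitution $(b,c)=(a^nb_0,ac_0)$ gives the measure $|a|^n\,\df a\,\df b_0\,\df c_0$, the radial integral $\int_0^{A}|a|^n\,\df a$ produces $\tfrac1{n+1}=\alpha(\Xc)/a(\Xc)$, the identity to check is again $l_2a(\Xc)-1=e_\Xc$ (now with $a(\Xc)=(n+1)/(l_1+nl_2)$ and $e_\Xc=(l_2-l_1)/(l_1+nl_2)$), and the renormalization factor $2^{|A|}=2$ built into $\tau_{\Xc,\{D_z\},\infty}$ absorbs one of the two factors of $2$ produced by the sign sectors, so that~\eqref{eq:omega_infty_DwDz_1} is reassembled. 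For $D=D_w+D_z$, $l_1=l_2$, Proposition~\ref{prop:rigid_integrals} and the preceding lemma have already extracted the $\log B$, reducing the claim to $\int_{H_L(1,b,c,bc,0,0)\le 1}\df b\,\df c=\tfrac12\omega_{\Xc,\infty}$: the only residual action is the second $\Gdm$, scaling by $1/b$ gives $H_L(1,b,c,bc,0,0)=|b|^{l_2}H_L(1,1,c_0,c_0,0,0)$ with $c_0=c$, and $\int_{|b|\le H_L^{-1/l_2}}\df b=2\,H_L(1,1,c_0,c_0,0,0)^{-a(\Xc)}$ (using $a(\Xc)=1/l_2$), which integrated over $c_0\in\Rd$ gives $\tfrac12\omega_{\Xc,\infty}$ by~\eqref{eq:omega_infty_DwDz_2}; matching $\tfrac{2}{l_1(n+1)}\cdot\tfrac12$ with $\alpha(\Xc)/a(\Xc)=1/((n+1)l_1)$ and recalling $b(\Xc)=2$ finishes this case, the error $O(B^{a(\Xc)})$ being inherited from the cited statements.

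I expect the computation to be essentially routine; the points requiring care are the bookkeeping of the factors of $2$ (the sign of $a$, the sign of $b_0$ versus the two halves of the $w_0$-axis, and the Clemens-complex renormalization $2^{|A|}$ in the residue measures), the observation that $\tilde\psi$ covers a conull subset of $X_n$ so that the change of variables is legitimate, and the short arithmetic identities relating the invariants, namely $l_2a(\Xc)-1=e_\Xc$ and $\alpha(\Xc)/a(\Xc)\in\{\tfrac1{n+2},\tfrac1{n+1},\tfrac1{(n+1)l_1}\}$; convergence of all integrals involved has already been noted ($W_\Xc(B)\ll B^{a(\Xc)}$, and $\omega_{\Xc,\infty}$ is finite, the $w_0$-integrals converging because $0<e_\Xc<1$ whenever a $w_0$-integration occurs, which holds since $n\ge 2$).
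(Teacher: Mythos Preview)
Your proposal is correct and follows essentially the same route as the paper: the same two changes of variables (the homogeneous rescaling $(b,c,z)\mapsto(a^n b_0,ac_0,az_0)$ realizing the first $\Gdm$-action, and the substitution $w_0=1/b_0$ realizing the chart $\tilde\psi$) and the same identity $l_2a(\Xc)-1=e_\Xc$, only run in the opposite direction---you transform $W_\Xc(B)$ into $\omega_{\Xc,\infty}$, whereas the paper starts from $\omega_{\Xc,\infty}$, introduces the radial variable via the identity $H^{-1}=\beta\int_{0\le t^\beta H\le 1}t^{\beta-1}\,\df t$, and arrives at the integral defining $W_\Xc$. Your bookkeeping of the factors of $2$ is slightly loosely phrased in the $D=D_w+D_z$, $l_1<l_2$ case (the $b_0$-sign sector maps bijectively onto the full $w_0$-axis and does not itself produce an extra factor of $2$; the single factor of $2$ from the $a$-sign is exactly what is absorbed by the $2^{|A|}$ in~\eqref{eq:omega_infty_DwDz_1}), but the outcome is right.
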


\begin{proof}
    For $D=D_w$, our starting point is \eqref{eq:omega_infty_Dw}.
    We observe that
    \begin{equation*}
        H_L(1,1,c,c+z^{n+1}w,z,w) = |b|^{-l_2} H_L(1,b,c,bc+z^{n+1},z,1)
    \end{equation*}
    for $w=b^{-1}$ using the homogeneity of $H_L$. By this change of variables and using the identity $1+e_\Xc=l_2 a(\Xc)$ (in the second coordinate of \eqref{eq:def-adjoint}) in the exponent of $b$, we obtain
    \begin{equation*}
        \omega_{\Xc,\infty} = \int \frac{\df b\,\df c\,\df z}{H_L(1,b,c,bc+z^{n+1},z,1)^{a(\Xc)}}.
    \end{equation*}
    Using the identity
    \begin{equation}\label{eq:identity_1/H}
        \frac{1}{H} = \beta \int_{0 \le t^\beta H \le 1} t^{\beta-1} \df t
    \end{equation}
    for $\beta=n+2$, the equality
    \begin{equation*}
        H_L(t,t^nb,tc,t^n(bc+z^n),tz,1) = t^{l_1+nl_2}H_L(1,b,c,bc+z^{n+1},z,1)
    \end{equation*}
    following from the homogeneity of the height function, and $n+2=(l_1+nl_2)a(\Xc)$ in the exponent of $t$, this density can be rewritten as
    \begin{equation*}
        \omega_{\Xc,\infty} = (n+2)\ints{H_L(t,t^nb,tc,t^n(bc+z^n),tz,1) \le 1\\t > 0} t^{n+1} \df t\, \df b\, \df c\, \df z.
    \end{equation*}
    Next, the change of variables $(a',b',c',z')=(t,t^nb,tc,tz)$ for positive $a'$ and extending the resulting integral to negative $a'$ (resulting in a factor $1/2$) yields
    \begin{equation*}
        \omega_{\Xc,\infty} = \frac{n+2}{2} \ints{H_L(a',b',c',(b'c'+z'^{n+1})/a',z',1) \le 1} \frac{\df a'\,\df b'\,\df c'\,\df z'}{|a'|},
    \end{equation*}
    and the statement follows using the description~\eqref{eq:alpha_not_loganticanonical} of $\alpha(\Xc)$ and its consequence $n+2 = a(\Xc)/\alpha(\Xc)$.

    For $D=D_w+D_z$ and $l_1<l_2$, we argue similarly, except that we have $z=0$ and use the case $\beta = n+1$ of \eqref{eq:identity_1/H}, and $n+1 = a(\Xc)/\alpha(\Xc)$.
    If finally $D=D_w+D_z$ and $l_1=l_2$, we start with~\eqref{eq:omega_infty_DwDz_2}.
    Applying \eqref{eq:identity_1/H} for $\beta=1$, $t=b$, the identity
    \begin{equation*}
        |b|^{l_2} H_L(1,1,c,c,0,0) = H_L(1,b,c,bc,0,0)
    \end{equation*}
    following from the homogeneity of $H_L$, and symmetrically extending the integral to negative $b$ (resulting in a factor $1/2$), this density is
    \begin{equation*}
        \omega_{\Xc,\infty} = 2 \int_{H_L(1,b,c,bc,0,0) \le 1} \df b\, \df c.
    \end{equation*}
    Finally, observe that $1/(l_1(n+1)) = \alpha(\Xc)/a(\Xc)$ using \eqref{eq:alpha_loganticanonical}.
\end{proof}

\subsection{The asymptotic formula}
Comparing this counting result with the analysis in Section~\ref{sec:leading-constants} leads to Theorem~\ref{thm:main-thm-abstract} in the following concrete form.
\begin{prop}
  If $\Xc = (\Xm_n,D,L)$ is adjoint rigid, then
  \begin{equation*}
    N_{\Xc,\emptyset}(B) = \frac{\alpha(\Xc)}{a(\Xc)}
    \prod_{v\in \Omega_\Qd} \omega_{\Xc,v}
    B^{a(\Xc)}(\log B)^{b(\Xc)-1}(1+O(1/\log B));
  \end{equation*}
  if it is not, then
  \begin{equation*}
    N_{\Xc,\emptyset}(B) = \sum_{t\in \Pd^2(\Qd)}
    \frac{\alpha(\Xc_t)}{a(\Xc)}
    \prod_{v\in \Omega_\Qd} \omega_{\Xc_t,v}
    B^{a(\Xc)}(\log B)^{b(\Xc)-1}(1+O(1/\log B)).
  \end{equation*}
\end{prop}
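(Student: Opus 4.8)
The plan is to read off the statement by feeding the counting estimates of this section into the local computations of Section~\ref{sec:leading-constants}, treating the adjoint rigid case and the moving case separately; in both cases the arithmetic factor is furnished by the Euler product $\prod_p\omega_{\Xc,p}$, and the only real work is to match constants and to verify that the error terms are absorbed by the factor $1+O(1/\log B)$.

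\emph{The adjoint rigid case.} For $D=D_w$ with $l_1<2l_2$ and for $D=D_w+D_z$ with $l_1\le l_2$ I would start from Proposition~\ref{prop:rigid_integrals}, substitute the evaluation of $W_\Xc(B)$ from Lemma~\ref{lem:W_interpretation}, and rewrite the resulting constant $\zeta(2)^{-1}\alpha(\Xc)\omega_{\Xc,\infty}/a(\Xc)$ using $\zeta(2)^{-1}=\prod_p(1-p^{-2})=\prod_p\omega_{\Xc,p}$ from~\eqref{eq:omega_p} together with $\prod_v\omega_{\Xc,v}=\omega_{\Xc,\infty}\prod_p\omega_{\Xc,p}$. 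It then remains to check that every error term that appears --- the $O(B^{a(\Xc)-\delta(\Xc)}(\log B)^3)$ or $O(B^{1/l_2})$ of Proposition~\ref{prop:rigid_integrals} and the $O(B^{a(\Xc)})$ of Lemma~\ref{lem:W_interpretation} --- is $O(B^{a(\Xc)}(\log B)^{b(\Xc)-2})$. This is immediate from~(\ref{eq:a-invariant-Dw},\,\ref{eq:a-invariant-DwDz},\,\ref{eq:b-invariant},\,\ref{eq:delta-nL}): whenever $b(\Xc)=1$ one has $\delta(\Xc)>0$ and $1/l_2<a(\Xc)$, so the error is a power saving; in the two cases with $b(\Xc)=2$ one has $a(\Xc)=1/l_2$, so $O(B^{1/l_2})=O(B^{a(\Xc)})$ and likewise for the term from the Lemma, against a main term of order $B^{a(\Xc)}\log B$. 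The remaining adjoint rigid case, $D=D_w$ with $l_1=2l_2$, is excluded from Proposition~\ref{prop:rigid_integrals}, but there $L$ is a multiple of the log anticanonical class, and the count is covered by Chambert-Loir and Tschinkel~\cite{CLT12} (Remark~\ref{rmk:equivariant}); one checks that their leading constant coincides with $\frac{\alpha(\Xc)}{a(\Xc)}\prod_v\omega_{\Xc,v}$ by unwinding the definitions of Section~\ref{sec:leading-constants}.

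\emph{The moving case.} Here Proposition~\ref{prop:count-moving} already gives $N_{\Xc,\emptyset}(B)=\sum_{t\in\Pd^2(\Qd)}c'(\Xc_t)B^{1/l_2}+O(B^{1/l_2-\delta}(\log B)^2)$, with $a(\Xc)=1/l_2$ and $b(\Xc)=1$, so the error is a power saving and it suffices to establish the fiberwise identity $c'(\Xc_t)=\frac{\alpha(\Xc_t)}{a(\Xc)}\prod_v\omega_{\Xc_t,v}$ for every $t\in\Pd^2(\Qd)$; convergence of the right-hand sum then follows from that of $\sum_t c'(\Xc_t)$ proved in Proposition~\ref{prop:count-moving}. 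For empty fibers both sides vanish by definition. For a nonempty fiber, $\Um_t\cong\Ad^1_\Zd$, so its reduction modulo $p$ is an affine line and the Picard rank entering the convergence factor is $0$; hence $\omega_{\Xc_t,p}=1$ for all $p$ and $\prod_v\omega_{\Xc_t,v}=\omega_{\Xc_t,\infty}$. The remaining identity $c'(\Xc_t)=\frac{\alpha(\Xc_t)}{a(\Xc)}\omega_{\Xc_t,\infty}$ is a one-variable counterpart of Lemma~\ref{lem:W_interpretation}: by definition $c'(\Xc_t)=V'_\Xc(a,c,z)$, evaluated in~\eqref{eq:constants-moving-explicit}, while the residue Tamagawa density $\omega_{\Xc_t,\infty}$ at $D_w\cap X_t$ is turned into the same quantity, up to the factor $a(\Xc)/\alpha(\Xc_t)$, by the same chain of homogeneity substitutions used in Lemma~\ref{lem:W_interpretation}, now carried out in the single variable parametrizing the fiber, together with the identification of $\alpha(\Xc_t)$ from the (trivial, $\Pic$ having rank $0$) effective cone of the fiber.

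\emph{Main obstacle.} The genuinely new step is the fiberwise constant identification in the moving case: one must run the Tamagawa-measure bookkeeping of Section~\ref{sec:leading-constants} simultaneously on all fibers $\Um_t$ and verify that the renormalization factors on the analytic Clemens complex of the fiber, the convergence factors, and the constant $\alpha(\Xc_t)$ combine so as to reproduce precisely the elementary volume $V'_\Xc(a,c,z)$ of~\eqref{eq:constants-moving-explicit}; granting this, the proposition reduces to collecting the estimates already proved and comparing exponents of $B$ and powers of $\log B$.
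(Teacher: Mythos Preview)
Your treatment of the adjoint rigid case matches the paper's proof essentially line for line: combine Proposition~\ref{prop:rigid_integrals}, Lemma~\ref{lem:W_interpretation}, and~\eqref{eq:omega_p}, and for $D=D_w$ with $l_1=2l_2$ invoke~\cite{CLT12}.

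In the moving case, however, the paper takes a different and shorter route that sidesteps precisely the ``main obstacle'' you flag. Rather than computing the fiberwise Tamagawa constants directly, the paper observes that each nonempty fiber $\Um_t\cong\Ad^1_\Zd$ is a partial equivariant compactification of $\Gda$, so~\cite{CLT12} applies to the fiber and yields
\[
  N_{\Xc_t,\emptyset}(B)=\frac{\alpha(\Xc_t)}{a(\Xc_t)}\prod_v\omega_{\Xc_t,v}\,B^{a(\Xc_t)}+O_t\bigl(B^{a(\Xc_t)-\delta(t)}\bigr).
\]
Comparing this with the elementary fiber count~\eqref{eq:points_on_fiber}, namely $N_{\Xc_t,\emptyset}(B)=c'(\Xc_t)B^{1/l_2}+O_t(1)$, forces both $a(\Xc_t)=1/l_2=a(\Xc)$ and $c'(\Xc_t)=\frac{\alpha(\Xc_t)}{a(\Xc_t)}\prod_v\omega_{\Xc_t,v}$, after which Proposition~\ref{prop:count-moving} finishes the argument. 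No Tamagawa bookkeeping on the fibers is ever carried out.

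Your direct approach should also work, but it is longer and leaves more to verify: you would need to pin down the Clemens complex and the face $A$ on each fiber, identify $\alpha(\Xc_t)$ in the corresponding Picard group (which is $\Pic(X_t)\cong\Zd$ rather than rank~$0$, since $\alpha$ lives in $\Pic(U_A)$ for a boundary stratum, not in $\Pic(U_t)$), check $a(\Xc_t)=1/l_2$ geometrically, and then run the one-variable analogue of Lemma~\ref{lem:W_interpretation}. The paper's comparison trick buys you all of this for free by appealing to~\cite{CLT12} a second time.
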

\begin{proof}
  In the rigid case, this follows from Proposition~\ref{prop:rigid_integrals}, Lemma~\ref{lem:W_interpretation}, and \eqref{eq:omega_p}. The only remaining case is $l_1=2l_2$. In this case, $L$ is a multiple of the log anticanonical bundle and $(X, D)$ is a partial equivariant compactification of $\mathds G_\mathrm{a}^3$; thus, the statement is a special case of~\cite{CLT12}.

  In the moving case, using the notation from \eqref{eq:counting_function}, $N_{\Xc_t,\emptyset}(B)$ denotes the number of integral points in the fiber $\phi^{-1}(t)$ of $L$-height at most $B$. On the one hand, recall that
  \begin{equation*}
      N_{\Xc_t,\emptyset}(B) = c'(\Xc_t)B^{1/l_2} + O_{t}(1)
  \end{equation*}
  by \eqref{eq:points_on_fiber}. On the other hand,
  \begin{equation*}
      N_{\Xc_t,\emptyset}(B) = c(\Xc_t)
    B^{a(\Xc_t)} + O_{t}(B^{a(\Xc_t)-\delta(t)})
  \end{equation*}
  with
  \begin{equation*}
    c(\Xc_t) = \frac{\alpha(\Xc_t)}{a(\Xc_t)}
    \prod_{v\in \Omega_\Qd} \omega_{\Xc_t,v}
  \end{equation*}
  for some $\delta(t)>0$ by \cite{CLT12}. Therefore, the leading constants and exponents of $B$ have to agree. Now the result follows from Proposition~\ref{prop:count-moving}.
\end{proof}

\bibliographystyle{amsalpha}
\begin{bibdiv}
\begin{biblist}

\bib{bri07}{article}{
      author={Brion, M.},
       title={The total coordinate ring of a wonderful variety},
        date={2007},
        ISSN={0021-8693},
     journal={J. Algebra},
      volume={313},
      number={1},
       pages={61\ndash 99},
         url={http://dx.doi.org/10.1016/j.jalgebra.2006.12.022},
}

\bib{BT98polarized}{article}{
      author={Batyrev, V.~V.},
      author={Tschinkel, Yu.},
       title={Tamagawa numbers of polarized algebraic varieties},
        date={1998},
        ISSN={0303-1179},
     journal={Ast\'erisque},
      number={251},
       pages={299\ndash 340},
        note={Nombre et r{\'e}partition de points de hauteur born{\'e}e (Paris,
  1996)},
}

\bib{Chow}{article}{
      author={Chow, D.},
       title={The distribution of integral points on the wonderful
  compactification by height},
        date={2024},
        ISSN={2199-675X,2199-6768},
     journal={Eur. J. Math.},
      volume={10},
      number={3},
       pages={Paper No. 48, 23},
         url={https://doi.org/10.1007/s40879-024-00761-1},
      review={\MR{4787944}},
}

\bib{CLT10}{article}{
      author={Chambert-Loir, A.},
      author={Tschinkel, Yu.},
       title={Igusa integrals and volume asymptotics in analytic and adelic
  geometry},
        date={2010},
        ISSN={1793-7442,1793-7434},
     journal={Confluentes Math.},
      volume={2},
      number={3},
       pages={351\ndash 429},
         url={https://doi.org/10.1142/S1793744210000223},
      review={\MR{2740045}},
}

\bib{CLT12}{article}{
      author={Chambert-Loir, A.},
      author={Tschinkel, Yu.},
       title={Integral points of bounded height on partial equivariant
  compactifications of vector groups},
        date={2012},
        ISSN={0012-7094,1547-7398},
     journal={Duke Math. J.},
      volume={161},
      number={15},
       pages={2799\ndash 2836},
         url={https://doi.org/10.1215/00127094-1813638},
      review={\MR{2999313}},
}

\bib{MR284446}{article}{
      author={Demazure, M.},
       title={Sous-groupes alg\'ebriques de rang maximum du groupe de
  {C}remona},
        date={1970},
        ISSN={0012-9593},
     journal={Ann. Sci. \'Ecole Norm. Sup. (4)},
      volume={3},
       pages={507\ndash 588},
         url={http://www.numdam.org/item?id=ASENS_1970_4_3_4_507_0},
      review={\MR{284446}},
}

\bib{der09}{article}{
      author={Derenthal, U.},
       title={Counting integral points on universal torsors},
        date={2009},
        ISSN={1073-7928},
     journal={Int. Math. Res. Not. IMRN},
      number={14},
       pages={2648\ndash 2699},
         url={http://dx.doi.org/10.1093/imrn/rnp030},
}

\bib{df14}{article}{
      author={Derenthal, U.},
      author={Frei, C.},
       title={Counting imaginary quadratic points via universal torsors},
        date={2014},
        ISSN={0010-437X},
     journal={Compos. Math.},
      volume={150},
      number={10},
       pages={1631\ndash 1678},
         url={http://dx.doi.org/10.1112/S0010437X13007902},
}

\bib{DG18}{article}{
      author={Derenthal, U.},
      author={Gagliardi, G.},
       title={Manin's conjecture for certain spherical threefolds},
        date={2018},
        ISSN={0001-8708,1090-2082},
     journal={Adv. Math.},
      volume={337},
       pages={39\ndash 82},
         url={https://doi.org/10.1016/j.aim.2018.08.005},
      review={\MR{3853044}},
}

\bib{DW24}{article}{
      author={Derenthal, U.},
      author={Wilsch, F.},
       title={Integral points on singular del {P}ezzo surfaces},
        date={2024},
        ISSN={1474-7480,1475-3030},
     journal={J. Inst. Math. Jussieu},
      volume={23},
      number={3},
       pages={1259\ndash 1294},
         url={https://doi.org/10.1017/S1474748022000482},
      review={\MR{4742717}},
}

\bib{MR89m:11060}{article}{
      author={Franke, J.},
      author={Manin, Yu.~I.},
      author={Tschinkel, Yu.},
       title={Rational points of bounded height on {F}ano varieties},
        date={1989},
        ISSN={0020-9910},
     journal={Invent. Math.},
      volume={95},
      number={2},
       pages={421\ndash 435},
}

\bib{gag14}{article}{
      author={Gagliardi, G.},
       title={The {C}ox ring of a spherical embedding},
        date={2014},
        ISSN={0021-8693},
     journal={J. Algebra},
      volume={397},
       pages={548\ndash 569},
         url={http://dx.doi.org/10.1016/j.jalgebra.2013.08.037},
}

\bib{OrtmannPaper}{article}{
      author={Ortmann, J.},
       title={Integral points on a del {P}ezzo surface over imaginary quadratic
  fields},
        date={2024},
        ISSN={2522-0160,2363-9555},
     journal={Res. Number Theory},
      volume={10},
      number={4},
       pages={Paper No. 90},
         url={https://doi.org/10.1007/s40993-024-00572-z},
      review={\MR{4822120}},
}

\bib{Peyre95Duke}{article}{
      author={Peyre, E.},
       title={Hauteurs et mesures de {T}amagawa sur les vari\'et\'es de
  {F}ano},
        date={1995},
        ISSN={0012-7094},
     journal={Duke Math. J.},
      volume={79},
      number={1},
       pages={101\ndash 218},
}

\bib{MR1679841}{article}{
      author={Salberger, P.},
       title={Tamagawa measures on universal torsors and points of bounded
  height on {F}ano varieties},
        date={1998},
        ISSN={0303-1179},
     journal={Ast\'erisque},
      number={251},
       pages={91\ndash 258},
        note={Nombre et r\'epartition de points de hauteur born\'ee (Paris,
  1996)},
}

\bib{Santens23}{unpublished}{
      author={Santens, T.},
       title={Manin's conjecture for integral points on toric varieties},
        date={2023},
        note={Preprint (arXiv:2312.13914v1)},
}

\bib{MR3117310}{article}{
      author={Takloo-Bighash, R.},
      author={Tschinkel, Y.},
       title={Integral points of bounded height on compactifications of
  semi-simple groups},
        date={2013},
        ISSN={0002-9327},
     journal={Amer. J. Math.},
      volume={135},
      number={5},
       pages={1433\ndash 1448},
         url={https://doi.org/10.1353/ajm.2013.0044},
      review={\MR{3117310}},
}

\bib{MR2275615}{incollection}{
      author={Tschinkel, Yu.},
       title={Geometry over nonclosed fields},
        date={2006},
   booktitle={International {C}ongress of {M}athematicians. {V}ol. {II}},
   publisher={Eur. Math. Soc., Z\"urich},
       pages={637\ndash 651},
      review={\MR{2275615}},
}

\bib{WilschIMRN}{article}{
      author={Wilsch, F.},
       title={Integral points of bounded height on a log {F}ano threefold},
        date={2023},
        ISSN={1073-7928,1687-0247},
     journal={Int. Math. Res. Not. IMRN},
      number={8},
       pages={6780\ndash 6808},
         url={https://doi.org/10.1093/imrn/rnac048},
      review={\MR{4574388}},
}

\bib{wilsch-toric}{article}{
      author={Wilsch, F.},
       title={Integral points of bounded height on a certain toric variety},
        date={2024},
     journal={Trans. Amer. Math. Soc. Ser. B},
      volume={11},
       pages={567\ndash 599},
}

\end{biblist}
\end{bibdiv}

\end{document}